\def\widebreve{\mathpalette\wide@breve}
\def\wide@breve#1#2{\sbox\z@{$#1#2$}%
     \mathop{\vbox{\m@th\ialign{##\crcr
\kern0.08em\brevefill#1{0.8\wd\z@}\crcr\noalign{\nointerlineskip}%
                    $\hss#1#2\hss$\crcr}}}\limits}
\def\brevefill#1#2{$\m@th\sbox\tw@{$#1($}%
  \hss\resizebox{#2}{\wd\tw@}{\rotatebox[origin=c]{90}{\upshape(}}\hss$}
\numberwithin{equation}{section}
\numberwithin{figure}{section}
\theoremstyle{plain}
\newtheorem{thm}{\protect\theoremname}[section]
\theoremstyle{plain}
\newtheorem{cor}[thm]{\protect\corollaryname}
\newenvironment{proof}[1][\protect\proofname]{\par
	\normalfont\topsep6\p@\@plus6\p@\relax
	\trivlist
	\itemindent\parindent
	\item[\hskip\labelsep\scshape #1]\ignorespaces
}{%
	\endtrivlist\@endpefalse
}
\providecommand{\proofname}{Proof}
\theoremstyle{remark}
\theoremstyle{plain}
\newtheorem{lem}[thm]{\protect\lemmaname}
\newcommand{\Sam}[1]{{\color{red} \sf Sam: [#1]}}
\providecommand{\corollaryname}{Corollary}
\providecommand{\lemmaname}{Lemma}
\providecommand{\remarkname}{Remark}
\providecommand{\theoremname}{Theorem}
\numberwithin{equation}{section}
\numberwithin{figure}{section}
\theoremstyle{plain}
\newtheorem{remark}[thm]
{Remark}
\newtheorem{prop}[thm]{\protect Proposition}
\newtheorem{conj}[thm]{\protect Conjecture}
\newtheorem{qn}[thm]{Question}
\def\1{{\textcolor{red} {1}}}
\def\d1{{\textcolor{red} {d-1}}}
\def \bN {\mathbb N}
\def \bQ {\mathbb Q}
\def \bR {\mathbb R}
\def \bZ {\mathbb Z}
\def \sL {\mathscr L}
\def \rd {\mathrm d}
\def \ri {\mathrm i}
\def \ba {\mathbf a}
\def \bc {\mathbf c}
\def \bd {\mathbf t}
\def \bm {\mathbf m}
\def \bn {\mathbf n}
\def \bfN {\mathbf N}
\def \bs {\mathbf s}
\def \bt {\mathbf t}
\def \bv {\mathbf v}
\def \bx {\mathbf x}
\def \by {\mathbf y}
\def \bzero {{\boldsymbol 0}}
\def \bone {{\boldsymbol 1}}
\def \btau {\boldsymbol{\tau}}
\def \balp {{\boldsymbol{\alp}}}
\def \bgam {{\boldsymbol{\gam}}}
\def \blam {{\boldsymbol{\lambda}}}
\def \bdel {{\boldsymbol{\del}}}
\def \brho {{\boldsymbol{\rho}}}
\def \bomega {\boldsymbol{\omega}}
\def \bome {{\boldsymbol{\omega}}}
\def \fB {\mathfrak B}
\def \fC {\mathfrak C}
\def \fD {\mathfrak T}
\def \fT {\mathfrak T}
\def \cC {\mathcal C}
\def \cG {\mathcal G}
\def \cL {\mathcal L}
\def \cM {\mathcal M}
\def \cO {\mathcal O}
\def \cP {\mathcal P}
\def \cQ {\mathcal Q}
\def \cR {\mathcal R}
\def \cU {\mathcal U}
\def \cW {\mathcal W}
\def \le {\leqslant}
\def \leq {\leqslant}
\def \ge {\geqslant}
\def \geq {\geqslant}
\def \dim {\mathrm{dim}}
\def \dimh {{\mathrm{\dim_H}}}
\def \det {\mathrm{det}}
\def \vol {\mathrm{vol}}
\def \Bad {{\mathbf{Bad}}}
\def \Mad {{\mathbf{Mad}}}
\def \d {{\mathrm{d}}}
\def \ds1 {\mathds{1}}
\def \alp {{\alpha}}
\def \bet {{\beta}}
\def \gam {{\gamma}}
\def \del {{\delta}}
\def \eps {{\varepsilon}}
\def \kap {{\kappa}}
\def \lam {{\lambda}}
\def \ome {{\omega}}
\providecommand{\lemmaname}{Lemma}
\providecommand{\theoremname}{Theorem}
\newcounter{@ToDo}
\newcommand{\todo@helper}[1]{%
	({\color{blue}TODO~\arabic{@ToDo}: {#1\@addpunct{.}}})%
}
\newcommand{\todo}[1]{\stepcounter{@ToDo}%
	\relax\ifmmode\text{\todo@helper{#1}}%
	\else\todo@helper{#1}\fi%
}
\newcounter{@cdo}
\newcommand{\cdo@helper}[1]{%
	({\color{red}CITE~\arabic{@cdo}: {#1\@addpunct{.}}})%
}
\newcommand{\cdo}[1]{\stepcounter{@cdo}%
	\relax\ifmmode\text{\cdo@helper{#1}}%
	\else\cdo@helper{#1}\fi%
}
\newcommand{\mmod}[1]{\,\,\mathrm{mod}\,\,#1}
\def \ds1 {\mathds{1}}
\def \alp {{\alpha}}
\def \bet {{\beta}}
\def \gam {{\gamma}}
\def \del {{\delta}}
\def \eps {{\varepsilon}}
\def \epsilon {{\varepsilon}}
\def \kap {{\kappa}}
\def \lam {{\lambda}}
\def \ome {{\omega}}
\def \Lam {{\Lambda}}
\begin{document}

\author{Sam Chow \and Niclas Technau}
\address{Mathematics Institute, Zeeman Building, University of Warwick, Coventry CV4 7AL, United Kingdom}
\email{sam.chow@warwick.ac.uk}
\address{Max Planck Institute for Mathematics, Vivatsgasse 7, 53111 Bonn, Germany; and
Mathematical Institute, University of Bonn, Endenicher Allee 60, 53115, Bonn, Germany}
\email{ntechnau@uni-bonn.de; technau@mpim-bonn.mpg.de}

\title{Smooth discrepancy and Littlewood's conjecture}
\subjclass[2020]{11K38 (primary); 11H06, 11J13, 11J71 (secondary)}
\keywords{Diophantine approximation, discrepancy theory, geometry of numbers}

\begin{abstract}
Given $\balp \in [0,1]^d$, we estimate
the smooth discrepancy of the Kronecker sequence
$(n \balp \mmod 1)_{n\geq 1}$. 
We find that it can be smaller than the classical discrepancy of 
\textbf{any} sequence when $d \le 2$, and can even be bounded in the case $d=1$. To achieve this, 
we establish a novel deterministic 
analogue of Beck's local-to-global principle (Ann. of Math. 1994), 
which relates the discrepancy of a Kronecker sequence 
to multiplicative diophantine approximation.
This opens up a new avenue of attack for Littlewood's conjecture.
\end{abstract}

\maketitle

\section{Introduction}

\subsection{Badly and madly approximable numbers}

It follows from Dirichlet's approximation theorem that if $\alp \in \bR$ then
\[
n \| n \alp \| < 1
\]
for infinitely many $n \in \bN$, where 
$\| x \| = \min \{ |x - a|: a\in \bZ \}$. 
Moreover, Khintchine's theorem implies that
\[
\liminf_{n \to \infty} n \| n \alp \| = 0
\]
for Lebesgue almost all $\alp$. However, this fails for a set $\Bad$ of \emph{badly approximable} numbers, which has Hausdorff dimension 1. Badly approximable numbers are ubiquitous in diophantine approximation, and are characterised by being irrational and having bounded partial quotients in the continued fraction expansion.

\emph{Multiplicative diophantine approximation} is about approximating several numbers at once with the same denominator, and multiplying the errors. For two numbers, a natural analogue of a badly approximable number is a pair $(\alp, \bet)$ such that
\[
\liminf_{n \to \infty} n \| n \alp \| \cdot \| n \bet \| > 0.
\]
Famously, Littlewood's conjecture asserts that there are no such pairs.

One can try to quantify matters further, and experts in the field will be familiar with the `mad conjectures' of Badziahin and Velani \cite{BV2011}, who defined
\[
\Mad^\lam = \left \{
(\alp, \bet) \in \bR^2: \liminf_{n \to \infty} n (\log n)^\lam
\| n \alp \| \cdot \| n \bet \| > 0 \right \}
\]
for $\lam \ge 0$. They suggested that $\Mad^\lam$ is empty whenever $\lam < 1$. Note that this is a strong form of Littlewood's conjecture. On the other hand, Badziahin \cite{Bad2013} showed that
\[
\dimh(\Mad^\lam) = 2
\qquad (\lam > 1).
\]
This extra logarithm also shows up in the work of Peck \cite{Pec1961}, who strengthened a result of Cassels and Swinnerton-Dyer to show that if $\alp, \bet$ lie in the same cubic field then
\[
\liminf_{n \to \infty} n (\log n) \| n \alp \| \cdot \| n \bet \| < \infty.
\]

Badziahin's result is somewhat stronger than what is stated above. We define the multiplicative height of $\bm \in \bR^{d}$
by
\[
H(\bm) = \prod_{i \le d} 
\max \{ 1, |m_i| \}.
\]

\begin{thm}
[Badziahin \cite{Bad2013}]
\label{BadziahinResult}
Let $f(n) = (\log n) \log \log n$. Then
\[
\dim_{\mathrm{H}}
(\Mad(f) \cap \Mad^*(f)) = 2,
\]
where
\begin{align*}
\Mad(f) &= \{ (\alp, \bet) \in \bR^2: \liminf_{n \to \infty}
n f(n) \| n \alp \| \cdot \| n \bet \| > 0 \},
\\
\Mad^*(f) &=
\{ 
(\alp, \bet) \in \bR^2: \liminf_{\bn \in \bZ^2}
H(\bn) f(H(\bn))
\| \bn \cdot (\alp, \bet) \| > 0 \}.
\end{align*}
\end{thm}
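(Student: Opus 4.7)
The plan is to exhibit a Cantor-like subset $K \subseteq [0,1]^2$ of full Hausdorff dimension $2$ that lies inside $\Mad(f) \cap \Mad^*(f)$. The construction will follow the Badziahin--Pollington--Velani paradigm: for suitable constants $c, c^* > 0$, a pair $(\alp,\bet)$ belongs to $\Mad(f)$ (resp.\ $\Mad^*(f)$) with threshold $c$ (resp.\ $c^*$) iff it avoids a countable family of ``bad'' axis-aligned rectangles, one family for each $q \in \bN$ (resp.\ each $\bn \in \bZ^2$). These rectangles are short in one direction and long in the other, with area roughly $1/(q \, f(q))$, and a rectangle survives iff its centre $(p/q, p'/q)$ satisfies the relevant lower bound.

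First I would dyadically decompose integers by $q \asymp 2^k$ and, within each dyadic block, further decompose by the \emph{shape} of the bad rectangle (that is, by $\log \|q\alp\|/\log \|q\bet\| \approx $ const). This ``shape decomposition'' is the key device introduced by Badziahin to absorb the extra $\log \log$ factor: within a single shape class, the bad rectangles have a fixed aspect ratio, and one can run a weighted Cantor construction tailored to that anisotropy. The number of shape classes in the $k$-th dyadic block is $O(k)$, which is exactly what the extra $\log \log$ in $f$ is designed to pay for.

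Next I would set up the Cantor construction on dyadic rectangles of matching aspect ratio, in stages indexed by $k$. At stage $k$ one partitions each surviving rectangle into a grid of children and removes those meeting a bad rectangle arising from any $q \asymp 2^k$ (and any $\bn$ with $H(\bn) \asymp 2^k$ for the dual problem). The core counting lemma to establish is: within each shape class, the number of children of a given parent rectangle removed by integers in a dyadic block is at most, say, a small fixed fraction of the total. Summing this over the $O(k)$ shape classes and using the $1/(k \log k)$ saving built into $f(2^k) = k \log 2 \cdot \log k$ (plus a constant), the total proportion of children removed per stage is bounded away from $1$ uniformly in $k$. Interlacing the removals for the linear form and the dual form (which satisfy symmetric estimates, as follows from Minkowski's theorem on successive minima) allows both conditions to be enforced in a single Cantor set.

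The main obstacle is the counting within a single shape class, which requires separating the contribution of rationals $p/q$ with different denominator sizes and controlling overlaps between the long, thin bad rectangles: geometrically nearby bad rectangles can align along a line and reinforce each other, threatening to remove too many children at once. Handling this requires a covering lemma of Minkowski/geometry-of-numbers type, showing that within a cell the overlapping bad hyperbolic regions cannot concentrate beyond a controlled amount. Once this is in place, the standard Badziahin--Pollington--Velani lower-bound mechanism (a mass distribution supported on $K$ obeying the Frostman exponent $2 - \eps$ for every $\eps > 0$) delivers $\dimh(K) = 2$, completing the proof.
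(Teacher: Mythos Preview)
The paper does not contain a proof of this theorem: Theorem~\ref{BadziahinResult} is quoted from Badziahin~\cite{Bad2013} and used as background for the discussion of $\Mad$-type sets and for the application of Theorem~\ref{thm: main} in two dimensions. There is therefore nothing in the paper to compare your proposal against.

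For what it is worth, your outline is broadly in the right direction for Badziahin's actual argument: the proof in \cite{Bad2013} is indeed a generalised Cantor construction in the Badziahin--Pollington--Velani framework, with the crucial refinement that one partitions the ``dangerous'' integers at each stage into classes according to the aspect ratio of the associated bad regions, and it is this secondary decomposition that absorbs the $\log\log$ factor. Your sketch correctly identifies this mechanism. However, as written it is only a high-level plan rather than a proof: the ``core counting lemma'' you allude to is the technical heart of Badziahin's paper and requires substantially more than a paragraph to set up and establish, and the interlacing of the simultaneous and dual conditions in a single Cantor set also needs care. If you intend to actually write out a proof, you would essentially be reproducing \cite{Bad2013}; since the present paper only cites the result, no proof is required here.
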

A result of Gallagher implies 
that almost all pairs $(\alp, \bet)$ 
satisfy Littlewood's conjecture 
and even lie in the complement of $\Mad^2$. Because 
of the one-periodicity of $\| \cdot\|$, it is 
natural to restrict attention to 
$(\alp, \bet)\in [0,1)^2$.
The full statement of Gallagher's theorem
has the quintessential flavour 
of a result in metric diophantine approximation.

\begin{thm} 
[Gallagher \cite{Gal1962}]
\label{thm: Gallagher}
Let $\psi: \bN \to [0,\infty)$ be non-increasing. Then
the set of
$(\alp, \bet) \in [0,1)^2$ such that
\begin{equation}
\label{GallagherIneq}
\| n \alp \| \cdot \| n \bet \| < \psi(n)
\end{equation}
holds for infinitely many $n \in \bN$ has Lebesgue measure
\[
\begin{cases}
0, &\text{if } 
\displaystyle \sum_{n=1}^\infty \psi(n) \log n < \infty, \\ \\
1, &\text{if } 
\displaystyle \sum_{n=1}^\infty \psi(n) \log n = \infty.
\end{cases}
\]
\end{thm}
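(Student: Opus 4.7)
My plan is to analyse $E_n := \{(\alp,\bet)\in [0,1)^2 : \|n\alp\|\cdot\|n\bet\| < \psi(n)\}$, whose $\limsup$ in $n$ is the set in the statement. A Fubini calculation, using that multiplication by $n$ preserves Lebesgue measure on $\bR/\bZ$, yields
\[
|E_n| \asymp \psi(n)\bigl(1+\log^+(1/\psi(n))\bigr),
\]
obtained by integrating the $\bet$-slice measure $\min(2\psi(n)/t,1)$ over $t=\|n\alp\|\in(0,1/2]$. For the convergence case, monotonicity of $\psi$ together with $\sum\psi(n)\log n<\infty$ forces $\psi(n)\ll 1/(n\log n)$ in the tail (by Cauchy condensation applied to the monotone series), so $\log(1/\psi(n))\asymp \log n$ eventually; hence $\sum |E_n|\ll \sum \psi(n)\log n<\infty$, and the Borel-Cantelli lemma closes this direction.

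The divergence case requires two ingredients. First, a zero-one law: I would invoke Cassels's general metric zero-one law (or a direct Kolmogorov-style tail argument based on the fact that the limsup event depends only on the tail of $(\|n\alp\|,\|n\bet\|)_n$) to conclude $|\limsup E_n|\in\{0,1\}$, reducing the task to strict positivity. Second, I would apply the Chung-Erd\H{o}s form of the divergence Borel-Cantelli lemma, which turns positivity into the quasi-independence bound
\[
\sum_{n,m\le N}|E_n\cap E_m|\ll \Bigl(\sum_{n\le N}|E_n|\Bigr)^2 \qquad (N\to\infty);
\]
since $\sum_{n\le N}|E_n|\to\infty$ under the divergence hypothesis, verifying this inequality would complete the proof.

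The main obstacle is the overlap estimate. Fixing $\alp$ and slicing in $\bet$, the integrand becomes the intersection of two one-dimensional Khintchine-type neighbourhoods $\{\|n\bet\|<\psi(n)/\|n\alp\|\}\cap\{\|m\bet\|<\psi(m)/\|m\alp\|\}$. For generic $(\alp,n,m)$ these behave independently, delivering the desired product bound; the danger lies in the regime where $d:=\gcd(n,m)$ is large, or $n$ is close to $m$, since then the finite subgroup of $\bT$ generated by $1/n$ and $1/m$ is small and correlations of size $d$ appear. I would control these via a Fourier expansion of $\ind_{\{\|\cdot\|<\delta\}}$ (equivalently, by explicit lattice-point counting on $\bZ^2$), isolating a diagonal contribution absorbed into $\sum_n|E_n|$. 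A parallel dyadic decomposition near the singular locus $\|n\alp\|\approx 0$ is then required when integrating back in $\alp$, and the crux is to verify that the near-diagonal and near-singular contributions are dominated by the square of the main term; past this bookkeeping the theorem follows.
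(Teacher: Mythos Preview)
The paper does not actually prove this theorem; it is quoted as a classical result of Gallagher and accompanied only by the remark that the convergence part follows from the first Borel--Cantelli lemma while the divergence part is the substance, requiring a quasi-independence argument because the events are far from pairwise independent. Your outline is therefore not being compared to a proof in the paper but to Gallagher's original argument, whose shape you have reproduced correctly: measure of $E_n$, first Borel--Cantelli for convergence, zero--one law plus Chung--Erd\H{o}s for divergence.

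Two comments on the details. In the convergence direction, your claim that $\log(1/\psi(n)) \asymp \log n$ is false as stated: monotonicity and $\sum \psi(n)\log n < \infty$ give $\psi(n) = o(1/(n\log n))$, hence $\log(1/\psi(n)) \gg \log n$, but there is no matching upper bound (take $\psi(n)=2^{-n}$). The fix is standard: on the set where $\psi(n) \ge n^{-2}$ one has $\log(1/\psi(n)) \le 2\log n$, and on the complement one uses that $x \mapsto x\log(1/x)$ is increasing near $0$ to bound $\psi(n)\log(1/\psi(n))$ by $2(\log n)/n^2$, which is summable. This is a small patch, but the line as written is incorrect.

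In the divergence direction, your proposal is an honest sketch rather than a proof: the overlap estimate $\sum_{n,m\le N}|E_n\cap E_m| \ll (\sum_{n\le N}|E_n|)^2$ is exactly the hard part Gallagher had to establish, and your paragraph names the relevant regimes (large $\gcd(n,m)$, small $\|n\alpha\|$) without carrying out the bookkeeping. In particular, invoking a Kolmogorov-type tail argument for the zero--one law is delicate here since $(\|n\alpha\|,\|n\beta\|)_n$ is not an independent sequence; one needs an arithmetic zero--one law of Cassels or Gallagher type, which you allude to but do not pin down. None of this is wrong in spirit, but the divergence half remains at the level of a plan.
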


The motivation for the threshold comes from probability theory. 
The convergence part of Theorem \ref{thm: Gallagher}
follows directly from the first Borel--Cantelli lemma.
In contrast, the divergence part
is the substance of the theorem. Since
the events \eqref{GallagherIneq} are 
far from being pairwise independent,
it is non-trivial that they satisfy 
the conclusion of the second Borel--Cantelli lemma.

What about specific values of $\alp$ and $\bet$? The inequality \eqref{GallagherIneq} holds if 
\begin{equation*} \label{2Kron}
(n \alp \mmod 1, n \bet \mmod 1)
\end{equation*}
lies in a small, axis-parallel rectangle around the origin. These points form a \emph{Kronecker sequence}. Their distribution
\begin{enumerate}[(i)]
\item near the origin (locally), and
\item throughout the torus $(\bR/\bZ)^2$ (globally)
\end{enumerate}
are inextricably tied. To elaborate, 
we recall some uniform distribution theory.

\subsection{Low-discrepancy sequences}

The \emph{discrepancy} of the first $N$ points of a sequence $(\bx_n)_{n \ge 1}$ in $[0,1)^d$ is
\[
D_N := \sup_\cR 
|\# \{ n \le N: \bx_n \in \cR \} - \lam_d(\cR) N|,
\]
where $\cR$ ranges over axis-parallel hyper-rectangles in $[0,1)^d$, and where $\lam_d$ is $d$-dimensional Lebesgue measure.
Up to a bounded multiplicative factor, this equals the \emph{star discrepancy} $D_N^*$, wherein the boxes are restricted to have the origin as a vertex \cite{KN1974}. A fundamental result in discrepancy theory, from 1945, is
due to van Aardenne-Ehrenfest. Her result is as follows.
\begin{thm}
[van Aardenne-Ehrenfest \cite{Aar1945}]
\label{thm: Ehrenfest}
For any sequence, the discrepancy $D_N$ is unbounded.
\end{thm}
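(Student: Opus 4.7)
The plan is to argue by contradiction: suppose $(\bx_n)_{n \ge 1}$ is a sequence in $[0,1)^d$ with $D_N \le C$ for every $N$.

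First I reduce to $d=1$ by projecting onto the first coordinate. An axis-parallel interval $I \subset [0,1)$ is the projection of the axis-parallel box $I \times [0,1)^{d-1}$, which has the same Lebesgue measure, and the first coordinate of $\bx_n$ lies in $I$ if and only if $\bx_n$ lies in that box. Hence the sequence of first coordinates in $[0,1)$ has discrepancy at most $C$ as well.

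For $d=1$, I would associate to the sequence the two-dimensional point set
\[
P_N = \bigl\{ (n/N,\, x_n) : 1 \le n \le N \bigr\} \subset [0,1)^2.
\]
Setting $A_M(t) = \#\{n \le M : x_n < t\}$, so that $|A_M(t) - Mt| \le C$ for all $M$ and $t$ by assumption, a direct computation shows that for any axis-parallel box $R = [a,b) \times [u,v) \subseteq [0,1)^2$,
\[
\#(P_N \cap R) = N \lam_2(R) + O(C),
\]
with the implied constant absolute (the count equals $A_{\lceil Nb \rceil - 1}(v) - A_{\lceil Nb \rceil - 1}(u) - A_{\lceil Na \rceil - 1}(v) + A_{\lceil Na \rceil - 1}(u)$, and each of the four terms differs from its expectation by $O(C)$). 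Hence the star discrepancy of $P_N$, viewed as an $N$-point set in $[0,1)^2$, is $O(C)$, uniformly in $N$.

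The final step is to invoke the classical fact that the star discrepancy of any $N$-point set in $[0,1)^2$ is unbounded as $N \to \infty$; in fact Roth's $L^2$ argument gives the stronger lower bound $\gg \sqrt{\log N}$. This contradicts the uniform $O(C)$ bound once $N$ is large enough, completing the proof. The main obstacle is the Roth-type lower bound itself, proved by expanding the discrepancy function in the Haar basis adapted to dyadic sub-rectangles of area $\ge 1/N$ and applying Parseval; the reductions above are routine. A self-contained alternative, following van Aardenne-Ehrenfest's original 1945 combinatorial argument, would directly track the oscillations of $A_N(t) - Nt$ as $N$ and $t$ vary through carefully chosen test values.
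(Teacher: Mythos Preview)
The paper does not prove this theorem; it merely quotes it from \cite{Aar1945} and then records the later strengthenings of Roth, Schmidt, and Bilyk--Lacey--Vagharshakyan. There is therefore no ``paper's own proof'' to compare against.

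Your argument is the standard and correct reduction: projecting to the first coordinate drops $d$ to $1$, and the sequence-to-point-set trick $P_N = \{(n/N, x_n): 1 \le n \le N\}$ (which is precisely how Roth himself passed from sequences to point sets) converts a uniform bound on the one-dimensional sequence discrepancy into a uniform $O(C)$ bound on the two-dimensional star discrepancy of $P_N$. Your four-term expression for $\#(P_N \cap R)$ and the resulting $O(C)$ estimate are correct.

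The only caveat is the final step. Invoking Roth's $L^2$ lower bound $D_N^*(P_N) \gg \sqrt{\log N}$ is mathematically fine, but it is a strictly stronger and later result than the one you are asked to prove, so the argument is not self-contained as written. You acknowledge this by pointing to the original 1945 combinatorial argument as an alternative; if the intent is a complete proof rather than a reduction, you would need to actually carry out either the Haar-basis/Parseval computation or van Aardenne-Ehrenfest's oscillation argument. As a \emph{proof sketch} or a reduction to a well-known black box, what you have is sound.
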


Theorem~\ref{thm: Ehrenfest} 
was famously strengthened by Roth \cite{Rot1954}, 
in what he considered to be his best work~\cite{Chen2015}. 
Further important refinements are due to Schmidt \cite{Sch1972} and
Bilyk--Lacey--Vagharshakyan \cite{BLV2008}, 
as discussed in Bilyk's survey article \cite{Bil2011}. 
We write $f \ll g$ or $g \gg f$ if $|f| \le C|g|$ pointwise, for some $C$, and we write $f \asymp g$ if $f \ll g \ll f$. In the case $d=1$, Schmidt \cite{Sch1972} established the optimal result that
\[
D_N \gg \log N
\]
holds for infinitely many $N \in \bN$. Upon coupling \cite[Theorem 1.2]{Bil2011} with the discussion after Theorem 1.1 therein, we see that if $d \ge 2$ then for some $\eta = \eta_d > 0$ and any sequence in $[0,1)^d$, there are infinitely many $N \in \bN$ such that
\begin{equation}
\label{GeneralLower}
D_N \gg (\log N)^{\frac{d}{2} + \eta}.
\end{equation}

\subsection{A local-to-global principle}

To illustrate the idea, let us start with the case $d=1$. Here, the analogue of a counterexample to Littlewood's conjecture is a badly approximable number. One can ask what a lower bound for $\|n \alp \|$ would imply about the global distribution of the Kronecker sequence $(n \alp \mmod 1)_{n \ge 1}$. Khintchine showed that if $f: (0,\infty) \to (0, \infty)$ is increasing then, for almost all $\alp \in \bR$, the discrepancy of the corresponding Kronecker sequence satisfies
\[
\| n \alp \| \gg \frac1{nf(\log n)}
\quad
\Leftrightarrow
\quad
D_N \ll (\log N) f(\log \log N)
\quad
\Leftrightarrow
\quad
\sum_{n=1}^\infty \frac1{f(n)} < \infty.
\]
Turning now to higher dimensions, 
one might hope to profitably link 
the multiplicative approximation properties of $\balp$ 
to the discrepancy of its Kronecker sequence. 
In 1994, Jozsef Beck used this philosophy 
to establish the following `metric' bound. To avoid pathologies, 
we interpret $\log x$ as $\max \{1, \log x\}$ throughout.

\begin{thm} [Beck \cite{Bec1994}]
Let $f: (0,\infty) \to (0,\infty)$ be increasing. Then,
for almost all $\balp \in \bR^d$, the discrepancy of the corresponding Kronecker sequence satisfies
\[
D_N \ll (\log N)^d f(\log \log N) \Leftrightarrow \sum_{n=1}^\infty \frac1{f(n)} < \infty.
\]
\end{thm}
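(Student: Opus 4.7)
Both directions reduce, via Fourier analysis, to metric statements about the linear forms $\bh \cdot \balp$ with $\bh \in \bZ^d \setminus \{\bzero\}$. The bridge is the Erd\H os--Tur\'an--Koksma inequality: since the exponential sum $\sum_{n \le N} e(n \bh \cdot \balp)$ over a Kronecker sequence is a geometric progression of modulus $\asymp \min\{N, \|\bh \cdot \balp\|^{-1}\}$, choosing the cut-off $H \asymp N$ yields
\[
D_N \ll 1 + \sum_{0 < |\bh|_\infty \le N} \frac{\min\{N, \|\bh \cdot \balp\|^{-1}\}}{r(\bh)}, \qquad r(\bh) := \prod_{i \le d} \max\{1, |h_i|\}.
\]

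For the convergence direction I assume $\sum_n 1/f(n) < \infty$. Using the standard count $\#\{\bh \in \bZ^d \setminus \{\bzero\} : r(\bh) \le R\} \asymp R (\log R)^{d-1}$ together with the calibration $\psi(R) := (R (\log R)^d f(\log\log R))^{-1}$, partial summation gives
\[
\sum_{\bh \ne \bzero} \psi(r(\bh)) \asymp \int_1^\infty \frac{dR}{R (\log R) f(\log\log R)} \asymp \sum_n \frac{1}{f(n)} < \infty,
\]
so the first Borel--Cantelli lemma supplies, for almost every $\balp$, the pointwise lower bound $\|\bh \cdot \balp\| \ge \psi(r(\bh))$ outside a finite exceptional set. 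This term-by-term substitution is too wasteful on its own; the decisive step is a second-moment strengthening showing that for almost every $\balp$ and all dyadic pairs $(R, \delta)$ the cardinality $\#\{\bh : r(\bh) \asymp R, \, \|\bh \cdot \balp\| \asymp \delta\}$ is comparable to its expected size $\asymp \delta R (\log R)^{d-1}$, up to an admissible logarithmic fudge. Summing over the double dyadic decomposition in $(R, \delta)$ then collapses the right-hand side of the displayed inequality down to the target bound $(\log N)^d f(\log\log N)$.

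For the divergence direction I assume $\sum_n 1/f(n) = \infty$ and argue that the discrepancy bound fails almost surely along a subsequence of $N$. The starting point is the divergence half of the dual Khintchine--Groshev theorem, which supplies, for almost every $\balp$, infinitely many $\bh$ whose linear-form values $\|\bh \cdot \balp\|$ fall below the matching threshold; since the underlying events are far from independent, establishing this requires a quasi-independence argument in the spirit of Theorem~\ref{thm: Gallagher}. The main obstacle, and the step where I expect the real work to be, is to convert a single resonance $\|\bh \cdot \balp\| \ll \psi(r(\bh))$ into a large value of the discrepancy \emph{for axis-aligned boxes}: the resonance confines the Kronecker sequence to neighbourhoods of the affine hyperplanes $\bh \cdot \bx \in \bZ$ on a time-scale $\asymp \|\bh \cdot \balp\|^{-1}$, yet the discrepancy is measured against boxes whose faces are generically not aligned with $\bh$. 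Engineering an axis-aligned rectangle that detects the resulting anisotropy --- effectively an inverse Koksma-type lower bound picking up the full $(\log N)^d f(\log\log N)$ of deviation at infinitely many $N$ --- is the crux of the matter, and I expect it to rest on the geometry of the reciprocal lattice together with a careful multiscale analysis of the Kronecker orbit.
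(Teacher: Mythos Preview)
This theorem is not proved in the paper; it is quoted as Beck's result \cite{Bec1994} to motivate the paper's own deterministic local-to-global principle. So there is no proof here to compare your proposal against.

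As a sketch of Beck's actual argument, your architecture is broadly right: the Erd\H{o}s--Tur\'an--Koksma bridge and a second-moment (variance) refinement drive the convergence half, and the divergence half rests on the metric dual-approximation input that the paper quotes as Beck's Lemma~4.3. You correctly identify the crux of the divergence direction --- promoting a resonance $\|\bh \cdot \balp\| \ll \psi(r(\bh))$ to a discrepancy lower bound against \emph{axis-aligned} boxes --- but your proposal does not resolve it. Saying you ``expect it to rest on the geometry of the reciprocal lattice together with a careful multiscale analysis'' is a placeholder, not an argument. In Beck's paper this step is the substantial one: one has to build a box whose side-lengths are adapted to the components of the resonant $\bh$ and show that the orbit count in that box deviates by the full $(\log N)^d f(\log\log N)$; the present paper's Theorem~\ref{thm: lower} carries out exactly this kind of construction in the smooth setting (see \S\ref{LowerProof}), and that is the template you are missing. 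On the convergence side, the ``second-moment strengthening'' you invoke is likewise asserted rather than executed; getting the dyadic count $\#\{\bh : r(\bh) \asymp R,\ \|\bh \cdot \balp\| \asymp \delta\}$ down to its expected size uniformly in $(R,\delta)$ for almost every $\balp$ is itself a nontrivial Borel--Cantelli/variance computation, and without it the Erd\H{o}s--Tur\'an--Koksma sum does not collapse as claimed.
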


\begin{cor} If $\eps > 0$ then, for almost all $\balp \in \bR^d$, the discrepancy of the corresponding Kronecker sequence satisfies
\[
D_N \ll_\eps (\log N)^d (\log \log N)^{1+\eps}.
\]
\end{cor}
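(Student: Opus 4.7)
The plan is to derive the corollary as an immediate consequence of Beck's theorem by a judicious choice of the function $f$. Fix $\eps > 0$ and set $f(t) = t^{1+\eps}$. Then $f: (0, \infty) \to (0, \infty)$ is increasing, and by the $p$-series test
\[
\sum_{n=1}^\infty \frac{1}{f(n)} \;=\; \sum_{n=1}^\infty \frac{1}{n^{1+\eps}} \;<\; \infty.
\]
The convergence direction of Beck's theorem therefore produces a set of $\balp \in \bR^d$ of full Lebesgue measure for which
\[
D_N \;\ll_{\balp,\eps}\; (\log N)^d f(\log \log N) \;=\; (\log N)^d (\log \log N)^{1+\eps},
\]
which is precisely the statement of the corollary.

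There is no genuine obstacle at this step: all the depth resides in Beck's theorem stated above, and the corollary is merely the specialisation to a concrete test function. Two minor observations are worth recording for cleanliness. First, the implicit constant depends on $\balp$ (through its membership in the full-measure set produced by Beck) as well as on $\eps$, which is consistent with the $\ll_\eps$ in the conclusion. Second, although the full-measure set \emph{a priori} depends on $\eps$, one may intersect the countable family of full-measure sets obtained by taking $\eps \in \{1/k : k \in \bN\}$ to produce a single full-measure set of $\balp$ on which the bound holds for every $\eps > 0$ simultaneously (with the implicit constant depending on $\eps$).
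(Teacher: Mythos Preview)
Your proposal is correct and matches the paper's intent: the corollary is stated immediately after Beck's theorem without proof, and the intended derivation is precisely the specialisation $f(t)=t^{1+\eps}$ that you carry out. Your additional remarks about the dependence of constants and about intersecting over $\eps\in\{1/k:k\in\bN\}$ are accurate refinements not spelled out in the paper.
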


\noindent
As Beck explains in the introduction of \cite{Bec1994}, it is believed that $(\log N)^d$ should be the optimal bound.

The metric nature of the statement 
is caused by some parts of the argument requiring averaging over $\balp$. 
The threshold comes from the \emph{dual multiplicative approximation} rate that applies to almost every $\balp \in \bR^d$, and we state \cite[Lemma 4.3]{Bec1994} below.

\begin{lem} 
Let $f: (0,\infty) \to (0,\infty)$ be increasing with 
\[
\sum_{n=1}^\infty \frac1{f(n)} = \infty.
\]
Then, for almost every $\balp \in \bR^d$, there are infinitely many $\bn \in \bN^d$ such that
\[
n_1 \cdots n_d (\log(n_1 \cdots n_d))^d f(\log \log (n_1^2 \cdots n_d^2)) \| \bn \cdot \balp \| < 1.
\]
\end{lem}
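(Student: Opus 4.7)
The plan is to apply the divergence half of the Borel--Cantelli lemma, in the quasi-independent form of Erd\H{o}s--Chung (or Sprind\v{z}uk), to the resonant sets
\[
E_{\bn} := \{ \balp \in [0,1]^d : \| \bn \cdot \balp \| < \psi(\bn) \},
\qquad
\psi(\bn) := \frac{1}{P(\bn) (\log P(\bn))^d \, f(\log \log(P(\bn)^2))},
\]
where $P(\bn) := n_1 \cdots n_d$. By one-periodicity of $\|\cdot\|$ we may restrict attention to $\balp \in [0,1]^d$; since $\bn \neq \bzero$, the pushforward of Lebesgue measure under $\balp \mapsto \bn \cdot \balp$ is uniform on $\bT$, and hence $\lambda_d(E_\bn) = 2\psi(\bn)$.

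First I would establish divergence of $\sum_\bn \lambda_d(E_\bn)$. Grouping by $P(\bn) = P$ and combining the classical estimate $\sum_{P \leq X} \tau_d(P) \asymp X(\log X)^{d-1}$ with partial summation, one gets
\[
\sum_{\bn \,:\, P(\bn) \leq X} \psi(\bn) \asymp \int_{e^e}^X \frac{\d t}{t (\log t)\, f(\log \log (t^2))}.
\]
Substituting $u = \log \log t$ and noting that $\log\log(t^2) = \log 2 + u$, monotonicity of $f$ shows that the right hand side diverges if and only if $\int^\infty \d u / f(u) = \infty$, which is our hypothesis.

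Next I would establish pairwise quasi-independence by Fourier analysis on $\bT^d$. Writing $\ind_{\|x\| < \psi}(x) = \sum_{k \in \bZ} c_k(\psi) e(kx)$ with $c_0 = 2\psi$ and $c_k = \sin(2\pi k \psi)/(\pi k)$ for $k \neq 0$, one computes
\[
\lambda_d(E_\bm \cap E_\bn) = \sum_{\substack{(k,\ell) \in \bZ^2 \\ k \bm + \ell \bn = \bzero}} c_k(\psi(\bm))\, c_\ell(\psi(\bn)).
\]
If $\bm$ and $\bn$ are linearly independent over $\bQ$ then only the term $(k,\ell) = (0,0)$ survives, giving exact independence $\lambda_d(E_\bm \cap E_\bn) = \lambda_d(E_\bm)\lambda_d(E_\bn)$. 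In the proportional case $\bm = (p/q)\bn$ with $\gcd(p,q) = 1$, the solution lattice is the one-parameter family $(k,\ell) = (-jq, jp)$, $j \in \bZ$; splitting the corresponding sum at $|j| \asymp (p\psi(\bn) + q\psi(\bm))^{-1}$ and using $|\sin x| \leq \min(1, |x|)$ on each piece produces an additional contribution of size $O(\min(\psi(\bm), \psi(\bn)))$.

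The final step is to control the accumulated contribution of $\bQ$-proportional pairs. For fixed $\bn$, the vectors $\bm \in \bN^d$ proportional to $\bn$ are parametrised by the divisors of $\gcd(n_1, \ldots, n_d)$ together with a positive integer multiplier, so standard divisor-sum estimates confirm that the total correction is of lower order than $(\sum_\bn \lambda_d(E_\bn))^2$. The quasi-independent divergence Borel--Cantelli lemma then forces $\lambda_d(\limsup E_\bn) > 0$, and a Cassels-type zero-one law for shift-invariant $\limsup$ sets defined by linear inequalities upgrades this to full measure. I expect the main obstacle to be precisely this bookkeeping for proportional pairs: individually each is harmless, but summing them requires careful divisor estimates to keep the independence-breaking correction negligible compared with the main term $(\sum_\bn \lambda_d(E_\bn))^2$.
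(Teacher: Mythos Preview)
The paper does not prove this lemma; it is quoted from \cite[Lemma~4.3]{Bec1994} without argument, so there is no in-paper proof to compare your attempt against.

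Regarding soundness: the divergence computation and the Fourier identity for $\lambda_d(E_{\bm}\cap E_{\bn})$ are correct, and exact independence for $\bQ$-linearly independent $\bm,\bn$ is a clean observation. The gap is in the proportional-pair step. With only the trivial bound $\lambda_d(E_{\bm}\cap E_{\bn})\le 2\min(\psi(\bm),\psi(\bn))$, the total proportional correction in dimension $d=2$ is
\[
2\sum_{P(\bn)\le X}\gcd(n_1,n_2)\,\psi(\bn)\ \asymp\ (\log X)\,S(X),
\qquad S(X):=\sum_{P(\bn)\le X}\psi(\bn),
\]
and the Erd\H os--Chung--Sprind\v zuk input requires this to be $O(S(X)^2)$. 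For $f(u)=u$ one has $S(X)\asymp\log\log\log X$, so $(\log X)\,S(X)\ll S(X)^2$ fails badly; and for $d=1$ every pair of positive integers is proportional, so your independence observation is vacuous there. Hence ``standard divisor-sum estimates'' do not close the argument as claimed. What is genuinely required is the sharper, $\gcd$-sensitive overlap estimate from the classical proof of Khintchine's theorem, applied along each primitive ray $\{t\bn_0:t\ge1\}$; this is the substantive analytic step in Beck's argument, not a bookkeeping afterthought.
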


This begs, once again, the question of whether one can relate multiplicative diophantine properties of $\balp$ to the discrepancy of the corresponding Kronecker sequence, for all $\balp$. In the case of classical discrepancy, this is too much to hope to have in any meaningful way, even in dimension $d=1$. Indeed, we will see that the classical analogue of our first new result --- Theorem \ref{thm: main} below --- is false, because it would produce impossibly small discrepancies in the cases $d=1$ and $d=2$.
In Beck's proof, the roadblock appears to be the use of a sharp cutoff in defining the discrepancy. The purpose of the present article is to resolve this matter.

\subsection{Smooth discrepancy}

The idea of going beyond indicator functions in this context features prominently in numerical integration, and is called \emph{quadrature} when $d = 1$ and \emph{cubature} when $d \ge 2$. We refer the reader to Temlyakov's article~\cite{Tem2003} for further discussion.

As well as the $d$ dimensions of the box, there is a further dimension for the term of the sequence. We smooth in all 
\[
k = d + 1
\]
dimensions.
Let 
$\bomega =(\omega_1,\ldots,\omega_k):\bR^k\rightarrow\bR^k$
be such that
\begin{equation}
\label{eq omega supported dyadically}
\ome_i: \bR \rightarrow [0, \infty), \quad 
\mathrm{supp}(\omega_i) \subseteq [-2,2],
\quad \omega_i \in C^\ell, 
\end{equation}
where $C^\ell$ denotes the set of the $\ell$-times
continuously differentiable functions, and
\begin{equation}
\label{positive}
\widehat \omega_i \ge 0, \qquad
\widehat \ome_i(0) > 0
\end{equation}
for each $i \le k$. Here we recall that the Fourier transform is given by
$$
\widehat \ome_i (\xi) := \int_\bR \ome_i(x) e(-\xi x)\rd x,
\quad \mathrm{where}\,\,e(z) = e^{2\pi \ri z}.
$$ 
Denote by $\cG_{k,\ell}$ the set of $\bome: \bR^k \to \bR^k$ 
satisfying \eqref{eq omega supported dyadically} and \eqref{positive}
for each $i \le k$.

For $\bgam \in \bR^d$
and $\brho \in (0,1/2)^d$, we define test boxes $\fB$
and their volume via
\begin{equation}
\label{def: box B}
\fB = (\bgam; \brho),
\qquad \vol(\fB; N) = \rho_1 \cdots \rho_d N.
\end{equation}
The smooth discrepancy in $\fB$ is the smooth point count 
minus the expected count:
\[
D_{\bomega}(\fB; N) := 
\sum_{\ba \in \bZ^k}
\ome_k \left(\frac{a_k}{N}\right) 
\prod_{i \le d}
\ome_i \left(
\frac{a_i + a_k \alp_i - \gam_i}{\rho_i}\right) -
\fC(\bomega) \vol(\fB; N),
\]
where
\begin{equation}
\label{def expect weight}
\fC(\bomega)
= \widehat{\omega}_1(0)
\cdots
\widehat{\omega}_k(0).
\end{equation}
We associate to $\bome \in \cG_{k,\ell}$ and $\balp \in \bR^d$ 
the $C^\ell$-\emph{smooth discrepancy} 
\[
D_{\bomega}(\balp;N) := 
\sup_\fB
\vert 
D_{\bomega}(\fB; N)
\vert,
\]
where the supremum is taken over all boxes 
$\fB$ as in \eqref{def: box B}. 
More generally, one can define the  $C^\ell$-smooth discrepancy 
of a sequence $\bx_1, \bx_2, \ldots$ in $\bR^d$ by taking
\[
D_{\bomega}(\fB; N) := 
\sum_{a_1,\ldots,a_d,n \in \bZ}
\ome_k \left( \frac{n}{N} \right) 
\prod_{i \le d}
\ome_i \left(
\frac{a_i + x_{n,i} - \gam_i}{\rho_i}\right) -
\fC(\bomega) \vol(\fB; N).
\]

\subsection{A deterministic local-to-global principle}

Fix $k \ge 2$ and 
$\balp \in \bR^{d}$, where $k=d+1$. 
Let 
$
\phi: [1,\infty) \to [1,\infty)
$
be a non-decreasing function. For $x \ge \phi(1)$, 
denote by $L(x) \in [1,x]$ 
the value of $H$ such that
\begin{equation}
\label{def L}
H \phi(H) = x.
\end{equation}
In many interesting cases we have $\phi(L(x)) \asymp \phi(x)$, for example if $\phi$ is a power of a logarithm, but in general it can be smaller. Surprisingly, for an appropriate choice of $\phi$ it transpires that $\phi(L(N))$ is the precise threshold for the smooth discrepancy of a Kronecker sequence.

We directly relate the smooth discrepancy to
multiplicative diophantine approximation.
In our first new result, we assume that $\balp$ is $\phi$-badly approximable in a 
dual multiplicative sense: 
\begin{equation}
\label{def phi}
\| \bm \cdot \balp \| > 
\frac1{H(\bm) \phi(H(\bm))} \qquad
(\bzero \ne \bm \in \bZ^{d}).
\end{equation}

\begin{thm}
\label{thm: main}  
Let $\balp \in \bR^d$. Suppose $\phi$ satisfies \eqref{def phi} and 
\begin{equation}
\label{eq: phi doubling}
\xi := \limsup_{x\rightarrow \infty}
\frac{\phi(2x)}{\phi(x)}<\infty.
\end{equation}
Let $\bomega \in \cG_{k,\ell}$ with 
\begin{equation}
\label{eq smoothness}
\ell > 2 + 2\frac{\log \xi}{\log 2}.
\end{equation}
If $N \ge \phi(1)$, then
$$
D_{\bomega}(\balp; N) \ll \phi(L(N)).
$$
\end{thm}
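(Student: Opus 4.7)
My plan is to pass to the Fourier side via Poisson summation and then apply the Diophantine hypothesis \eqref{def phi} to every non-zero frequency. Applying Poisson first in $a_1,\ldots,a_d$ and then in $a_k$, the inner sum in the definition of $D_\bomega(\fB;N)$ equals
\[
N\prod_{i\le d}\rho_i \sum_{\bm\in\bZ^d,\,m_k\in\bZ}\prod_{i\le d}\widehat{\ome_i}(m_i\rho_i)\,\widehat{\ome_k}\bigl(N(m_k-\bm\cdot\balp)\bigr)\,e(-\bm\cdot\bgam).
\]
The frequency $(\bm,m_k)=(\bzero,0)$ reproduces exactly $\fC(\bomega)\vol(\fB;N)$, while the terms with $\bm=\bzero$ and $m_k\ne0$ total $\ll N^{1-\ell}\ll 1$ by the fast decay of $\widehat{\ome_k}$. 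Everything substantive therefore lives in the sum over $\bm\ne\bzero$.

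For such $\bm$, I would invoke two ingredients. The compact support and $C^\ell$-regularity of $\ome_i$ yield the standard Fourier decay $|\widehat{\ome_i}(\xi)|\ll(1+|\xi|)^{-\ell}$. Summing the $m_k$-series, only the integer nearest to $\bm\cdot\balp$ matters, giving
\[
\sum_{m_k\in\bZ}\widehat{\ome_k}\bigl(N(m_k-\bm\cdot\balp)\bigr)\ll\min\bigl(1,(N\|\bm\cdot\balp\|)^{-\ell}\bigr).
\]
Inserting \eqref{def phi} promotes this to $\min\bigl(1,(H(\bm)\phi(H(\bm))/N)^\ell\bigr)$, with the transition between the two branches occurring precisely at $H(\bm)=L(N)$ by definition of $L$. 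The task thereby reduces to bounding
\[
N\prod_i\rho_i\sum_{\bzero\ne\bm\in\bZ^d}\prod_{i\le d}(1+|m_i|\rho_i)^{-\ell}\cdot\min\bigl(1,(H(\bm)\phi(H(\bm))/N)^\ell\bigr)\ll\phi(L(N)).
\]

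I would attack this by a dyadic decomposition $|m_i|\in(M_i/2,M_i]$ with $M_i=2^{k_i}$, so that $H(\bm)\asymp H=\prod_i M_i$, and control $\phi(H)$ across dyadic scales through the doubling hypothesis \eqref{eq: phi doubling}, which yields $\phi(H)\ll\xi^{\log_2 H}$ up to a constant. Splitting according to whether $H\le L(N)$ or $H>L(N)$, the first regime uses the Diophantine factor $(H\phi(H)/N)^\ell$ together with the count $\asymp H(\log H)^{d-1}$ of multiplicatively constrained lattice points at scale $H$ and the smoothness decay, producing a geometric series in $\log_2 H$ dominated at the endpoint $H\asymp L(N)$; the second regime is controlled by the smoothness decay alone and again peaks at the threshold. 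After reinstating the $N\prod\rho_i$ prefactor and using $L(N)\phi(L(N))=N$, both contributions collapse to $\asymp\phi(L(N))$. The main obstacle is producing this \emph{sharp} bound uniformly over all boxes $\fB=(\bgam;\brho)$: no cancellation is available in $e(-\bm\cdot\bgam)$, since the supremum over $\bgam$ forces worst-case phases, and the supremum over $\brho\in(0,1/2)^d$ means the smoothness decay kicks in at wildly different dyadic scales in different coordinates, so one must carefully balance the regimes $M_i\rho_i\lessgtr1$ coordinate by coordinate. The threshold \eqref{eq smoothness} is exactly what makes all the relevant geometric series converge once the $\log^{d-1}$ counting factor and the dyadic growth $\phi(H)^\ell\sim\xi^{\ell\log_2 H}$ are accounted for, which is why the bound on $\ell$ involves $\log_2\xi$.
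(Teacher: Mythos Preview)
Your Poisson summation step and the reduction to the sum over $\bm\ne\bzero$ are correct and match the paper. The fatal problem is the step where you ``promote'' $\min\bigl(1,(N\|\bm\cdot\balp\|)^{-\ell}\bigr)$ to $\min\bigl(1,(H(\bm)\phi(H(\bm))/N)^\ell\bigr)$ via the pointwise lower bound \eqref{def phi}. This throws away the fact that, for different $\bm$ at the same dyadic scale, the values $\|\bm\cdot\balp\|$ are \emph{spread out}, not all sitting near the extremal value $1/(H(\bm)\phi(H(\bm)))$. A single dyadic box makes the failure explicit: take $d=2$, $\phi\equiv c$ constant, and $\rho_1=\rho_2=\rho$ with $\rho^{-2}=L(N)=N/c$ (so $\vol(\fB;N)=c=\phi(L(N))$, right at the threshold). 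For $|m_1|,|m_2|\asymp\rho^{-1}$ one has $H(\bm)\asymp L(N)$, the smoothness factor $\prod_i(1+|m_i|\rho_i)^{-\ell}\asymp1$, and your Diophantine factor $(H\phi(H)/N)^\ell\asymp1$. There are $\asymp L(N)=N/c$ such $\bm$, so this block alone contributes $N\rho^2\cdot(N/c)\cdot1\cdot1=N$ to your upper bound, not $c$. Your claimed collapse to $\phi(L(N))$ cannot hold.

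What the paper does instead is apply \eqref{def phi} to \emph{differences}: for $\bm,\bm'$ in the same dyadic box one has $|\|\bm\cdot\balp\|-\|\bm'\cdot\balp\||\gg 1/(H(\bfN)\phi(H(\bfN)))$, so the projection onto the last coordinate is injective with a quantitative gap (Lemma~\ref{le app regime outside uncert}). One then enumerates the images and sums $\widehat\ome_k$ along an arithmetic-like progression, which converges. This gap principle is the missing idea; without it the $m_k$-sum cannot be controlled at the critical scale $H(\bm)\asymp L(N)$. The paper also treats boxes with $\vol(\fB;N)\le\phi(L(N))$ by a separate direct counting argument (Lemma~\ref{lem discr small box}) rather than by Fourier analysis; your attempt to handle all $\brho$ uniformly through the Fourier side is precisely where the argument breaks.
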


Our next theorem will tell us that this bound in sharp. In the case \mbox{$d=1$,} we see from Theorem \ref{thm: main} that the smooth discrepancy of the Kronecker sequence of any badly approximable number is bounded, which is in stark contrast to Theorem~\ref{thm: Ehrenfest}. 

In the case $d=2$, which is most relevant to Littlewood's conjecture, Badziahin showed that there exists $(\alp_1, \alp_2) \in \bR^2$ for which we can take 
\[
\phi(N) \asymp \log N \cdot \log \log N,
\]
as we saw in Theorem \ref{BadziahinResult}.
Fregoli and Kleinbock~\cite{FK} recently extended this to arbitrary 
$d \ge 2$, showing  there exists $\balp \in \bR^d$ for which we can take
\[
\phi(N) \asymp
(\log N)^{d-1} \log \log N.
\]
Combining this with Theorem \ref{thm: main}, we see that
\[
D_{\bome}(\balp; N) \ll (\log N)^{d-1} \log \log N
\]
for these Kronecker sequences. When $d = 2$, this is smaller than the classical discrepancy of \textbf{any} sequence, by \eqref{GeneralLower}.

We complement Theorem \ref{thm: main} 
with a matching lower bound.

\begin{thm}
\label{thm: lower}
Let $\bome \in \cG_{k,\ell}$, where $\ell \geq 2$. Suppose 
$\phi: [1, \infty) \to [1, \infty)$ is non-decreasing, and
\begin{equation*}
\label{DualGood}
\| \bm \cdot \balp \| < \frac1{H(\bm) \phi(H(\bm))}
\end{equation*}
for infinitely many $\bm \in \bZ^d$.
Then there exist infinitely many $N \in \bN$ such that
\[
D_\bome(\balp; N) \gg \phi(L(N)).
\]
\end{thm}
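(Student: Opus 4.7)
The plan is to expand $D_\bome(\fB; N)$ as a Fourier series in the box centre $\bgam$ via Poisson summation in $a_1, \ldots, a_k$, isolate the contribution of a well-approximating $\bm^*$, and then extract a pointwise lower bound by $L^2$-averaging over $\bgam$. This is essentially the converse of the Fourier strategy behind Theorem~\ref{thm: main}.

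First I would fix $\bm^* \in \bZ^d \setminus \{\bzero\}$ satisfying $\|\bm^*\cdot\balp\| < 1/(H^*\phi(H^*))$ with $H^* := H(\bm^*)$; by hypothesis we may arrange $H^*$ to be as large as we like. Set $N := \lfloor H^*\phi(H^*)/2 \rfloor$, so that $N\|\bm^*\cdot\balp\| < 1/2$ and, since $x \mapsto x\phi(x)$ is non-decreasing, $L(N) \leq H^*$, whence $\phi(L(N)) \leq \phi(H^*)$. Fix a small constant $c \in (0, 1/2)$ (to be chosen below, independent of $\bm^*$) and take $\rho_i := c/\max(1, |m^*_i|)$ for $i \leq d$, so that $\brho \in (0, 1/2)^d$ and $\rho_1 \cdots \rho_d = c^d/H^*$; the centre $\bgam$ is left free and will be averaged over.

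Next, apply Poisson summation first in $a_1, \ldots, a_d$ and then in $a_k$ to obtain
\[
D_\bome(\fB; N) = \sum_{\bzero \neq \bm \in \bZ^d} f(\bm)\, e(-\bm\cdot\bgam) + E,
\]
where
\[
f(\bm) := \rho_1 \cdots \rho_d\, N \prod_{i \leq d} \widehat{\omega}_i(\rho_i m_i) \sum_{n \in \bZ} \widehat{\omega}_k\bigl(N(n - \bm\cdot\balp)\bigr) \geq 0,
\]
and the $\bgam$-independent remainder $E$ collects the $n \neq 0$ part of the $\bm = \bzero$ term (its $n = 0$ part cancels $\fC(\bome)\vol(\fB; N)$), satisfying $|E| \ll N^{1-\ell}$ via $\widehat{\omega}_k(\xi) \ll |\xi|^{-\ell}$ (which uses $\ell \geq 2$). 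For the distinguished index, $\rho_i m^*_i \in \{0, \pm c\}$, so continuity of each $\widehat{\omega}_i$ at $0$ lets us fix $c$ small enough that $\widehat{\omega}_i(\rho_i m^*_i) \geq \widehat{\omega}_i(0)/2$ uniformly in $\bm^*$. Using $\widehat{\omega}_k \geq 0$, retaining only the nearest integer $n_0$ to $\bm^*\cdot\balp$ and noting $|N(n_0 - \bm^*\cdot\balp)| = N\|\bm^*\cdot\balp\| < 1/2$ yields $\widehat{\omega}_k(N(n_0 - \bm^*\cdot\balp)) \geq \widehat{\omega}_k(0)/2$. Combining,
\[
f(\bm^*) \gg \frac{N}{H^*} \gg \phi(H^*) \geq \phi(L(N)).
\]

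To finish, Parseval on the torus $[0,1]^d$ gives
\[
\sup_\bgam |D_\bome(\fB; N)|^2
\geq \int_{[0,1]^d} |D_\bome(\fB;N)|^2 \, d\bgam
= \sum_{\bzero \neq \bm} f(\bm)^2 + |E|^2
\geq f(\bm^*)^2,
\]
so $D_\bome(\balp; N) \geq \sup_\bgam |D_\bome(\fB;N)| \gg \phi(L(N))$. Ranging $\bm^*$ over infinitely many good dual approximations with $H^* \to \infty$ produces infinitely many such $N$. No step presents a genuine difficulty; the only delicate point is the Poisson bookkeeping, in particular verifying that the $\bm = \bzero$ contribution matches $\fC(\bome)\vol(\fB; N)$ up to the $O(N^{1-\ell})$ tail. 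The positivity hypothesis~\eqref{positive} is essential: it allows the inner $n$-sum to be lower-bounded by a single term, and prevents cancellation among Fourier coefficients from destroying the isolated coefficient $f(\bm^*)$ after Parseval.
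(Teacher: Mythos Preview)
Your proof is correct and shares the essential ingredients with the paper's argument: Poisson summation, the choice $\rho_i \asymp 1/\max(1,|m_i^*|)$ and $N \asymp H^*\phi(H^*)$, and the identification of a single Fourier coefficient $f(\bm^*)\gg \phi(L(N))$. The difference lies in the final extraction step. The paper simply takes $\bgam=\bzero$: then every summand $f(\bm)e(-\bm\cdot\bgam)=f(\bm)\ge 0$ by \eqref{positive}, and $E\ge 0$ as well, so $D_\bome(\fB;N)\ge f(\bm^*)$ directly. Your $L^2$-averaging over $\bgam$ via Parseval reaches the same conclusion but is a detour; the positivity hypothesis already tells you that $\bgam=\bzero$ maximises the sum. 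One minor point: the constant $1/2$ in $N=\lfloor H^*\phi(H^*)/2\rfloor$ should be replaced by a constant small enough that $|\xi|$ in that range forces $\widehat\omega_k(\xi)\ge \widehat\omega_k(0)/2$; the paper fixes a single constant $c\in(0,1/2)$ with $\widehat\omega_i(x)\ge c$ for $|x|\le c$ and uses it both for $\brho$ and for $N$.
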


\subsection{An approach to Littlewood's conjecture}

Recall that Littlewood's conjecture is the $d=2$ case of the following conjecture.

\begin{conj}[Littlewood's conjecture in $d$ dimensions]
\label{Littlewood} 
Let $\balp \in \bR^d$, where $d \ge 2$. Then
\[
\liminf_{n \to \infty} n \| n \alp_1 \| \cdots \| n \alp_d \| = 0.
\]
\end{conj}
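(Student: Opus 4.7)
The plan is to use the local-to-global principle of Theorem~\ref{thm: main} in a contrapositive fashion, so that a putative violation of Littlewood's conjecture forces an impossibly small smooth discrepancy. Suppose, for contradiction, that some $\balp \in \bR^d$ is a counterexample, that is, $n \|n\alp_1\| \cdots \|n\alp_d\| \geq c > 0$ for every $n \in \bN$.

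The first step is to translate this primal multiplicative condition into the dual bound \eqref{def phi} demanded by Theorem~\ref{thm: main}. A multiplicative transference principle, in the spirit of Cassels--Swinnerton-Dyer but adapted to the multiplicative height $H(\bm)$, is expected to furnish a non-decreasing $\phi$, at worst a slowly growing polylogarithm, for which \eqref{def phi} holds. Any such $\phi$ will satisfy the doubling condition \eqref{eq: phi doubling} with finite $\xi$, so the smoothness constraint \eqref{eq smoothness} is mild and met by standard bump functions. Establishing this transference in sharp form is itself a subtle problem, and is the first non-trivial hurdle in the program.

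Once in hand, Theorem~\ref{thm: main} yields, for any admissible $\bome \in \cG_{k,\ell}$, the uniform upper bound
$$D_{\bome}(\balp; N) \ll \phi(L(N))$$
for $N \geq \phi(1)$. The contradiction must then be extracted by pairing this against an \emph{absolute} lower bound on the smooth discrepancy of a Kronecker sequence in dimension $d \geq 2$, valid for every $\balp$, and growing strictly faster than $\phi(L(N))$ along some sparse subsequence of $N$.

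The main obstacle lies in establishing this last ingredient. The classical unboundedness theorems of van Aardenne-Ehrenfest, Roth, Schmidt, and Bilyk--Lacey--Vagharshakyan rely essentially on sharp cutoffs, and Theorem~\ref{thm: main} itself demonstrates that they \textbf{cannot} extend to the smooth setting when $d=1$, where bounded smooth discrepancy is actually attained. The dimension $d \geq 2$ must therefore be exploited in a genuinely non-trivial way. A promising avenue is Fourier-analytic: expanding $D_{\bome}(\fB; N)$ by Poisson summation produces a sum over dual frequencies $\bm$, in which the main contributions come from those $\bm$ realising good multiplicative approximations of $\balp$; one would try to show, for cleverly chosen test boxes $\fB$ and smoothings $\bome$, that the frequencies in a dyadic range contribute with enough coherence to force $|D_{\bome}(\balp; N)| \to \infty$ along a sparse sequence of $N$. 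Even a purely qualitative assertion --- that the smooth discrepancy of any two-dimensional Kronecker sequence is unbounded --- would, combined with Theorem~\ref{thm: main}, already suffice to establish Littlewood's conjecture.
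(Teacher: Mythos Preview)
The statement you are attempting to prove is Conjecture~\ref{Littlewood}, which the paper does \emph{not} prove; it is one of the most famous open problems in diophantine approximation. What the paper does establish is the equivalence in Theorem~\ref{equiv}: Littlewood's conjecture in $d$ dimensions holds if and only if every $d$-dimensional Kronecker sequence has unbounded $C^3$-smooth discrepancy. Your proposal is not a proof but a restatement of this program, and you explicitly identify the missing step yourself in your final paragraph.

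Two specific comments. First, your transference step is overcomplicated. You speak of extracting a ``polylogarithmic'' $\phi$ from a primal counterexample via a multiplicative transference principle, calling this ``the first non-trivial hurdle''. In fact the paper already records (see the discussion around Conjecture~\ref{Littlewood}) that Littlewood's conjecture is \emph{equivalent} to its dual form via \cite{CSD1955, dM2012}. So if $\balp$ is a primal counterexample, then $\inf_{\bzero \ne \bm} H(\bm)\|\bm\cdot\balp\| > 0$, and one may take $\phi$ \emph{constant} in \eqref{def phi}. With constant $\phi$ one has $\xi = 1$ in \eqref{eq: phi doubling}, so $\ell = 3$ suffices in \eqref{eq smoothness}, and Theorem~\ref{thm: main} gives $D_{\bome}(\balp;N) \ll 1$. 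This step is therefore clean and complete, not a hurdle.

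Second, and decisively, the ``absolute lower bound'' you seek in your last two paragraphs is precisely Question~\ref{MainOpenQuestion} of the paper, which is open. You correctly observe that the classical irregularity theorems do not survive smoothing (Theorem~\ref{thm: main} with $d=1$ gives bounded smooth discrepancy for badly approximable $\alp$), and that any progress must exploit $d \ge 2$ in a new way. But the Fourier-analytic sketch you offer is not an argument: the Poisson expansion is exactly what underlies Theorems~\ref{thm: main} and~\ref{thm: lower}, and those results show that large contributions to $D_\bome(\balp;N)$ come \emph{only} from good dual multiplicative approximations to $\balp$ --- which, under the counterexample hypothesis, do not exist. So the very mechanism you invoke is neutralised by the assumption you are trying to contradict. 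Your proposal thus reduces Littlewood's conjecture to Question~\ref{MainOpenQuestion}, which is exactly the content of Theorem~\ref{equiv} and \S\ref{ConcludingRemarks}, not a proof of the conjecture.
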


Moreover, this is equivalent to its dual form, see \cite{CSD1955, dM2012}.

\begin{conj}
[Dual form of Littlewood's conjecture in $d$ dimensions]
For $d \ge 2$, any $\balp \in \bR^d$ satisfies
\[
\inf_{\bzero \ne \bm \in \bZ^d} H(\bm)
\| \bm \cdot \balp \| = 0.
\]
\end{conj}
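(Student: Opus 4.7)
The final statement is Littlewood's conjecture in its dual form, for which no unconditional proof is currently known. The framework developed in this paper suggests the following conditional approach, and it is the perspective advertised by the abstract.

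Suppose, for contradiction, that the conjecture fails: there exist $\balp \in \bR^d$ (with $d \ge 2$) and a constant $c > 0$ such that
\[
H(\bm) \| \bm \cdot \balp \| \ge c \qquad (\bzero \ne \bm \in \bZ^d).
\]
This is precisely the Diophantine hypothesis \eqref{def phi} with the constant function $\phi \equiv C$, where $C := \max(1, c^{-1}) \ge 1$ ensures $\phi$ takes values in $[1,\infty)$. The doubling hypothesis \eqref{eq: phi doubling} holds trivially with $\xi = 1$, so \eqref{eq smoothness} reduces to $\ell > 2$. Picking any $\bome \in \cG_{k,\ell}$ with $\ell \ge 3$, and noting that $\phi(L(N)) = C$, Theorem~\ref{thm: main} yields
\[
D_{\bome}(\balp; N) \ll C
\]
uniformly in $N$. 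In other words, any counterexample to dual Littlewood would produce a Kronecker sequence with \emph{uniformly bounded} smooth discrepancy.

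The plan to close the argument is to derive a contradiction from this boundedness. It would suffice to establish a smooth analogue of van Aardenne-Ehrenfest's theorem (Theorem~\ref{thm: Ehrenfest}) in dimension $d \ge 2$: namely, that for \emph{every} sequence $(\bx_n)_{n\ge 1}$ in $[0,1)^d$, and for a suitable $\bome \in \cG_{k,\ell}$, the smooth discrepancy $D_{\bome}(\bx_1,\bx_2,\ldots; N)$ is unbounded in $N$. Combining such a result with the uniform boundedness derived above would give dual Littlewood, and hence Littlewood's conjecture, in view of the equivalence with its primal form.

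The main obstacle is precisely this smooth van Aardenne-Ehrenfest step. In dimension $d = 1$ the analogue is \emph{false}: by Theorem~\ref{thm: main}, every badly approximable $\alp$ yields a bounded $C^\ell$-smooth discrepancy, in stark contrast to the classical Theorem~\ref{thm: Ehrenfest}. A proof for $d \ge 2$ must therefore isolate an obstruction genuinely absent when $d=1$. Moreover, the classical lower bounds of Schmidt, Roth, and Bilyk--Lacey--Vagharshakyan all exploit boundary discontinuities of indicator functions, and those cancellations are smoothed away by the Fourier-positive weights $\bome$ satisfying \eqref{positive}. New techniques — likely combining Poisson summation with geometry-of-numbers input about the additive structure of the integer spectrum of $\widehat{\bome}$ — will be needed to recover unboundedness once indicators are replaced by $\bome$.
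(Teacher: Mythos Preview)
Your proposal is correct in the only sense that matters here: the statement is an open conjecture, the paper does not prove it, and you rightly say so. The conditional strategy you outline---assume a counterexample, apply Theorem~\ref{thm: main} with constant $\phi$ to get bounded $C^3$-smooth discrepancy, then seek a contradiction via a smooth van Aardenne-Ehrenfest theorem---is exactly the approach the paper advertises in Theorem~\ref{equiv} and Question~\ref{MainOpenQuestion}, and your discussion of the obstacle (the $d=1$ case shows smooth discrepancy \emph{can} be bounded, so any argument must be genuinely higher-dimensional) matches the paper's own commentary.
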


Notice that constant functions $\phi$ trivially satisfy 
\eqref{eq: phi doubling} with $\xi=1$.
Thus, if Littlewood's conjecture fails in $d$ dimensions, 
then we can take $\phi$ bounded in Theorem \ref{thm: main},
obtaining a $d$-dimensional Kronecker sequence with bounded 
$C^3$-smooth discrepancy.
Conversely, suppose now Littlewood's conjecture holds in $d$ dimensions, 
then we can always take $\phi$ unbounded in Theorem \ref{thm: lower}, 
to find that any $d$-dimensional Kronecker sequence 
has unbounded $C^3$-smooth discrepancy. Let us now summarise.

\begin{thm}
\label{equiv}
Let $d \ge 2$ be an integer. Then Littlewood's conjecture holds in 
$d$ dimensions if and only if any $d$-dimensional Kronecker sequence 
has unbounded $C^3$-smooth discrepancy.
\end{thm}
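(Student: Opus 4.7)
The plan is to deduce both directions of the equivalence from Theorems \ref{thm: main} and \ref{thm: lower}, using the equivalence of Littlewood's conjecture with its dual form. Each implication is handled by contraposition, so that a failure of one side supplies an explicit witness $\phi$ for the other.

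For the direction ``if Littlewood's conjecture fails in $d$ dimensions, then some Kronecker sequence has bounded $C^3$-smooth discrepancy,'' I would take a hypothetical counterexample $\balp \in \bR^d$ to the dual form of the conjecture. This provides a positive infimum $c := \inf_{\bzero \ne \bm \in \bZ^d} H(\bm)\|\bm \cdot \balp\| > 0$, so the constant function $\phi \equiv \max\{1, 1/c\}$ satisfies \eqref{def phi}. Being constant, $\phi$ has doubling ratio $\xi = 1$, so \eqref{eq smoothness} reduces to $\ell > 2$, and $\ell = 3$ suffices. Theorem \ref{thm: main} then yields $D_{\bome}(\balp; N) \ll \phi(L(N)) = O(1)$ uniformly in $N$, furnishing a $d$-dimensional Kronecker sequence with bounded $C^3$-smooth discrepancy.

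For the converse direction I would fix an arbitrary $\balp \in \bR^d$ and assume that Littlewood's conjecture (equivalently, its dual form) holds at $\balp$, so that $\inf_{\bzero \ne \bm} H(\bm)\|\bm \cdot \balp\| = 0$. Extract a sequence $\bm_k \in \bZ^d$ with $H_k := H(\bm_k)$ strictly increasing and $c_k := H_k\|\bm_k \cdot \balp\|$ monotonically decreasing to $0$ (after passing to a subsequence; the rational case in which $c_k$ eventually vanishes is treated identically). Build a non-decreasing unbounded $\phi: [1,\infty) \to [1,\infty)$ piecewise by $\phi \equiv 1$ on $[1, H_1)$ and $\phi(x) = \max\{1, c_k^{-1/2}\}$ for $x \in [H_k, H_{k+1})$. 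Then for all large $k$,
\[
\|\bm_k \cdot \balp\| = \frac{c_k}{H_k} < \frac{1}{H_k\, \phi(H_k)},
\]
so Theorem \ref{thm: lower} produces infinitely many $N$ with $D_{\bome}(\balp; N) \gg \phi(L(N))$. Because $\phi$ takes only finite values, the defining identity $L(N)\phi(L(N)) = N$ forces $L(N) \to \infty$, and unboundedness of $\phi$ then ensures $\phi(L(N)) \to \infty$ along these $N$. This gives unbounded $C^3$-smooth discrepancy at $\balp$.

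All the substantive analysis is concentrated in Theorems \ref{thm: main} and \ref{thm: lower}; here the task is a bookkeeping exercise, translating Littlewood's conjecture into an assertion about the existence of an admissible bounded or unbounded $\phi$. The only real care lies in constructing the non-decreasing $\phi$ with values in $[1, \infty)$ matching the witness sequence in the converse direction, and in confirming that $L(N) \to \infty$ so that $\phi(L(N))$ diverges; I would not expect either to pose a substantial obstacle.
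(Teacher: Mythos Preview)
Your argument is correct and follows exactly the paper's approach: both deduce the equivalence from Theorems \ref{thm: main} and \ref{thm: lower} via the dual form of Littlewood's conjecture, using a constant $\phi$ in one direction and an unbounded $\phi$ in the other. Your treatment is simply more explicit than the paper's one-paragraph sketch, particularly in constructing the step function $\phi$ and verifying $\phi(L(N)) \to \infty$; the only cosmetic adjustment is that \eqref{def phi} demands strict inequality, so take $\phi \equiv \max\{1, 2/c\}$ rather than $\max\{1, 1/c\}$ in case the infimum $c$ is attained.
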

\begin{remark}[Littlewood's conjecture and smoothness]
Thus, Littlewood's conjecture is asking whether the $C^3$-smooth counterpart of the van Aardenne-Ehrenfest theorem is true. 
This a rare instance of a smooth problem being more demanding
than its non-smoothed version, see Question \ref{MainOpenQuestion}.
\end{remark}

\subsection{Smooth discrepancy of unimodular lattices}

We prove Theorems \ref{thm: main} and \ref{thm: lower}
in the more general setting of lattices, which we now describe. 
A \emph{lattice} $\Lambda$ in $\bR^k$ is a discrete, full-rank, additive subgroup of 
$\bR^k$. It is well known that $\Lambda$ can be written as
$\Lambda = A \bZ^k$, for some invertible matrix 
$A\in \bR^{k\times k}$ whose columns are called 
a \emph{basis} for $\Lambda$, and that
$\det(\Lambda) := \vert \det (A)\vert$ is well defined. We say that $\Lambda$ is
\emph{unimodular} if $\det (\Lambda) = 1$, and let 
$$
\sL_k= \{\Lam\subset \bR^k: \Lambda \mathrm{\,is\,a\,unimodular\,lattice}\}
\cong \mathrm{SL}_{k}(\bR)/\mathrm{SL}_{k}(\bZ).
$$
For $\bgam \in \bR^d$
and $\brho \in (0,1/2)^d$, let $\fB$ be as in \eqref{def: box B}.
For $\Lambda \in \sL_k$, define
\[
D_{\bomega}^{\Lambda}(\fB; N) = 
\sum_{(\blam,\lambda_k) \in \Lambda}
\ome_k \left(\frac{\lam_k}{N}\right) 
\prod_{i \le d}
\ome_i \left(
\frac{\lam_i - \gam_i}{\rho_i}\right) -
\fC(\bomega) \vol(\fB; N).
\]
We associate to $\bome \in \cG_{k,\ell}$ and $\Lambda \in \sL_k$
the \emph{smooth lattice discrepancy} 
\[
D_{\bomega}(\Lambda;N) := 
\sup_\fB
\vert 
D_{\bomega}^{\Lambda}(\fB; N)
\vert,
\]
where the supremum is taken over all boxes 
$\fB$ as in \eqref{def: box B}. 
Given $\Lam = A \bZ^k \in \sL_k$,
we denote its the dual lattice by 
$$
\Lam^{*}=(A^{-1})^{T} \bZ^k.
$$

We establish the following upper bound.

\begin{thm}
\label{thm: main general lattice} 
Let $\bomega \in \cG_{k,\ell}$ and  $\Lam \in \sL_k$.
Suppose 
$\phi: [1, \infty) \to [1, \infty)$ is a non-decreasing function satisfying
\begin{equation}
\label{eq: lattice dio lower bound}
\vert H(\blam) \lambda_k \vert > \frac{1}{\phi(H(\blam))}
\end{equation}
for any non-zero $(\blam,\lam_k)\in \Lambda^{*}$, 
as well as \eqref{eq: phi doubling}. 
If the function $L$ is given by \eqref{def L}
and $\ell$ satisfies \eqref{eq smoothness}, then
$
D_{\bomega}(\Lam; N) \ll \phi(L(N)).
$
\end{thm}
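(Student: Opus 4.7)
The approach is to apply Poisson summation on the unimodular lattice $\Lam$ and exploit the diophantine hypothesis \eqref{eq: lattice dio lower bound} via a dyadic decomposition of the resulting dual lattice sum. Setting
\[
F(\bx, x_k) := \ome_k(x_k/N)\prod_{i \le d}\ome_i\bigl((x_i-\gam_i)/\rho_i\bigr),
\]
a change of variables yields
\[
\widehat F(\bxi, \xi_k) = N\rho_1\cdots\rho_d\,\widehat\ome_k(N\xi_k)\prod_{i \le d}e(-\gam_i\xi_i)\widehat\ome_i(\rho_i\xi_i).
\]
Since $\det\Lam = 1$, Poisson summation then gives
\[
D^{\Lam}_{\bome}(\fB; N) = N\rho_1\cdots\rho_d \sum_{\bzero \ne (\bmu,\mu_k)\in\Lam^{*}} \widehat\ome_k(N\mu_k)\prod_{i \le d}e(-\gam_i\mu_i)\widehat\ome_i(\rho_i\mu_i),
\]
the $\bzero$-term on the dual side having absorbed $\fC(\bome)\vol(\fB;N)$. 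Since $\ome_i \in C^\ell$ has compact support, repeated integration by parts gives $|\widehat\ome_i(\xi)| \ll (1+|\xi|)^{-\ell}$, and $\widehat\ome_i \geq 0$ together with $|e(-\gam_i\mu_i)| = 1$ reduces matters, uniformly in $\bgam$, to bounding
\[
\Sigma := N\rho_1\cdots\rho_d\sum_{\bzero \ne (\bmu,\mu_k)\in\Lam^{*}}\frac{1}{(1+N|\mu_k|)^\ell}\prod_{i\le d}\frac{1}{(1+\rho_i|\mu_i|)^\ell}.
\]

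Next, I would dyadically decompose by the multiplicative height $H(\bmu) \in (H/2, H]$ for $H = 2^j$, $j \ge 0$. The exceptional contribution from $\bmu = \bzero$ is $O(1)$, because $\{\mu_k : (\bzero,\mu_k)\in\Lam^{*}\}$ is a discrete subgroup of $\bR$ whose smallest positive element is $\ge 1/\phi(1)$ by \eqref{eq: lattice dio lower bound}. For $\bmu \ne \bzero$ the same hypothesis forces $|\mu_k| > 1/(H\phi(H))$ at scale $H$, so the diophantine condition rules out precisely the dangerous lattice points near the hyperplane $\mu_k = 0$. A Minkowski-type count using $\meas\{\bx \in \bR^d : H(\bx) \le H\} \asymp H(\log H)^{d-1}$ bounds the number of terms per dyadic level $H$, and a further dyadic split in $|\mu_k| \sim J \ge 1/(H\phi(H))$ separates the contributions around the critical threshold $H \asymp L(N)$ (at which $L(N)\phi(L(N)) = N$). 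For $H \le L(N)$ the factor $(1+N|\mu_k|)^{-\ell}$ drives the decay, since $N|\mu_k| \ge N/(H\phi(H)) \ge 1$; for $H > L(N)$, it is the product $\prod(1+\rho_i|\mu_i|)^{-\ell}$ together with the large-$H$ count that provides the decay. The dominant contribution arises near $H \asymp L(N)$ and matches $\phi(L(N))$.

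The main obstacle I anticipate is establishing the bound uniformly in $\brho \in (0,1/2)^d$ together with an accurate count of lattice points of $\Lam^{*}$ in the non-convex hyperbolic-cross region $\{H(\bmu) \le H\}$ (the projection $\pi(\Lam^{*}) \subseteq \bR^d$ need not lie in $\bZ^d$ for a general unimodular $\Lam$). The doubling assumption \eqref{eq: phi doubling} provides $\phi(H) \ll H^{\log_2\xi}$, and the smoothness condition $\ell > 2 + 2\log_2\xi$ is calibrated precisely so that the dyadic series $\sum_j \phi(2^j)^{O(1)}\,2^{-j(\ell - O(1))}$ converges and is dominated by its term at $j \approx \log_2 L(N)$, yielding the bound $\Sigma \ll \phi(L(N))$.
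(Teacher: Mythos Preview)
Your Poisson summation setup and the reduction to $\Sigma$ are correct, and you rightly observe that \eqref{eq: lattice dio lower bound} forces $|\mu_k|>1/(H(\bmu)\phi(H(\bmu)))$ and that the critical scale is $H\asymp L(N)$. However, the proposal has two genuine gaps.

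First, decomposing only by the scalar $H(\bmu)$ is too coarse: it does not constrain the individual $|\mu_i|$, so at a given $H$-level you cannot extract anything from $\prod_i(1+\rho_i|\mu_i|)^{-\ell}$, and a volume count $\meas\{H(\bx)\le H\}\asymp H(\log H)^{d-1}$ would in any case insert an unwanted $(\log H)^{d-1}$ into the final bound. The paper instead decomposes $\Lam^{*}\setminus\{\bzero\}$ into \emph{component-wise} dyadic boxes $\cU(\btau)$, $\btau\in\bZ_{\ge 0}^k$, on which each $|\lam_i|$ is pinned down. The key device you are missing is a \emph{gap principle} (Lemma~\ref{le app regime outside uncert}): on each $\cU(\btau)$ the projection $(\blam,\lam_k)\mapsto\lam_k$ is injective with image spaced by $\gg 1/(H(\bfN)\phi(2^d H(\bfN)))$, simply by applying \eqref{eq: lattice dio lower bound} to differences of lattice points. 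This turns the contribution of $\cU(\btau)$ into a one-dimensional sum $\sum_i(1+w_i)^{-\ell}$ over well-spaced reals, bypassing hyperbolic-cross lattice point counting altogether. You flagged that count as ``the main obstacle''; the gap principle is the idea that dissolves it.

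Second, your assertion that ``for $H>L(N)$, the product $\prod(1+\rho_i|\mu_i|)^{-\ell}$ provides the decay'' fails uniformly in $\brho\in(0,1/2)^d$. If $\vol(\fB;N)\le\phi(L(N))$ then $H(\brho^{-1})\ge L(N)$, and there are many $\bmu$ with all $|\mu_i|\le 1/\rho_i$ yet $H(\bmu)>L(N)$, for which neither the $\rho$-product nor $(1+N|\mu_k|)^{-\ell}$ is small. The paper handles such small boxes by an entirely separate, non-Fourier argument (Lemma~\ref{lem discr small box}): the smooth count is dominated by the cardinality of a Bohr set $\Lam\cap\cP_{(\brho,N)}$, which is shown to be $\ll\vol$ once $\vol\ge\phi(L(N))$ via Mahler's duality relations and a Blichfeldt-type bound (Lemma~\ref{outer}). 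The Poisson/gap-principle analysis is only run when $\vol(\fB;N)>\phi(L(N))$, which forces $H(\brho^{-1})\le L(N)$ and makes the uncertainty regime empty.
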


This upper bound is optimal, as the next results shows.

\begin{thm}
\label{thm: lower general lattice}
Let $\bome \in \cG_{k,\ell}$ with $\ell \geq 2$, and let $\Lam \in \cL_k$. Let 
$\phi: [1, \infty) \to [1, \infty)$ be a non-decreasing function such that
\begin{equation}
\label{DualGood lattice}
\vert H(\blam) \lambda_k \vert < \frac{1}{\phi(H(\blam))}
\end{equation}
holds for infinitely many $(\blam,\lam_k) \in \Lambda^{*}$.
Then there exist infinitely many $N \in \bN$ such that
$
D_\bome(\Lam; N) \gg \phi(L(N)),
$
where the function $L$ is as in \eqref{def L}. 
\end{thm}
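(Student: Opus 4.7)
My approach is to apply Poisson summation to $D_\bome^\Lam(\fB; N)$ and then exploit the positivity assumption \eqref{positive} on $\hat\ome$ together with a single well-chosen dual lattice vector. Since $\Lam \in \sL_k$ is unimodular, Poisson summation gives
\begin{equation*}
D_\bome^\Lam(\fB; N) \;=\; N\rho_1 \cdots \rho_d \sum_{\bzero \ne (\bmu,\mu_k) \in \Lam^*} \hat\ome_k(N\mu_k) \prod_{i=1}^d \hat\ome_i(\rho_i \mu_i) \, e(-\bmu \cdot \bgam),
\end{equation*}
after the zero Fourier mode is absorbed by $\fC(\bome) \vol(\fB;N)$. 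The hypothesis $\ome_j \in C^\ell$ with $\ell \ge 2$ and compact support gives $|\hat\ome_j(\xi)| \ll (1+|\xi|)^{-\ell}$, which secures absolute convergence.

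For each admissible $(\blam, \lam_k) \in \Lam^*$ (i.e., satisfying \eqref{DualGood lattice}) with $H(\blam)$ sufficiently large, I resonate a test box with it. Fix, once and for all, a constant $c_0 \in (0,1/2)$ small enough that $\hat\ome_j(\xi) \ge \tfrac12 \hat\ome_j(0)$ whenever $|\xi| \le c_0$; this is possible by continuity and \eqref{positive}. Set
\begin{equation*}
\bgam := \bzero, \qquad \rho_i := c_0/\max\{1,|\lam_i|\} \;\; (1 \le i \le d), \qquad N := \lfloor c_0\, H(\blam) \phi(H(\blam)) \rfloor \in \bN.
\end{equation*}
Then $\rho_i \in (0,1/2)$, $N \asymp H(\blam)\phi(H(\blam))$, $|\rho_i \lam_i| \le c_0$ by construction, and $N |\lam_k| \le c_0 H(\blam)\phi(H(\blam)) \cdot |\lam_k| < c_0$ by \eqref{DualGood lattice}. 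Hence every Fourier weight appearing at the resonant vector $(\blam,\lam_k)$ is $\gg 1$.

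The decisive step is that at $\bgam = \bzero$ every phase $e(-\bmu \cdot \bzero)$ equals $1$, and then by \eqref{positive} the whole Poisson sum is a sum of non-negative terms. Keeping only the summand at $(\bmu,\mu_k) = (\blam,\lam_k)$ gives
\begin{equation*}
D_\bome(\Lam;N) \;\ge\; D_\bome^\Lam(\fB;N) \;\ge\; N\rho_1 \cdots \rho_d \, \hat\ome_k(N\lam_k) \prod_{i=1}^d \hat\ome_i(\rho_i \lam_i) \;\gg\; N \prod_{i=1}^d \rho_i \;\asymp\; \phi(H(\blam)),
\end{equation*}
using $\prod_i \rho_i \asymp 1/H(\blam)$. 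Finally $N \le H(\blam)\phi(H(\blam))$ together with the monotonicity of $H \mapsto H\phi(H)$ forces $L(N) \le H(\blam)$, whence $\phi(L(N)) \le \phi(H(\blam))$ and therefore $D_\bome(\Lam;N) \gg \phi(L(N))$. Discreteness of $\Lam^*$ forces $H(\blam) \to \infty$ along any infinite sequence of admissible pairs, so $N \to \infty$ too, yielding infinitely many distinct $N$.

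I do not foresee a serious obstacle. The argument hinges on one conceptual move --- taking $\bgam = \bzero$ --- which aligns every Poisson phase and turns what would otherwise be a delicate cancellation problem into the trivial bound ``a sum of non-negative numbers exceeds any one summand.'' All remaining work is routine parameter bookkeeping; the hypothesis \eqref{positive} is doing the heavy lifting.
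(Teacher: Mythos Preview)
Your argument is correct and essentially identical to the paper's proof: centre the box at $\bgam=\bzero$, apply Poisson summation so that all summands are non-negative by \eqref{positive}, and then choose $\brho$ and $N$ to resonate with a single dual vector $(\blam,\lam_k)$ satisfying \eqref{DualGood lattice}. Your parameter choices $\rho_i = c_0/\max\{1,|\lam_i|\}$ and $N = \lfloor c_0 H(\blam)\phi(H(\blam))\rfloor$ differ from the paper's only cosmetically (the paper uses $c/(1+|\lam_i|)$), and your final chain $N \le H(\blam)\phi(H(\blam)) \Rightarrow L(N) \le H(\blam) \Rightarrow \phi(L(N)) \le \phi(H(\blam))$ is exactly the paper's reasoning.
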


\begin{remark}
It is well known that there are $\Lambda \in \sL_k$ such that 
\eqref{eq: lattice dio lower bound} holds with $\phi$ constant.
Here is a construction. Let $\beta\in \bR$ 
be an algebraic number of degree $k$ such that each of its conjugates is real. Then $F=\bQ(\beta)$ is a totally real number field.
Let $\cO_F$ be its ring of integers, and let $\sigma_1,\ldots,\sigma_k$
be the canonical embeddings $F \rightarrow \bR$.
The Minkowski embedding $\cM:\cO_F \rightarrow\bR^k$ 
defined by 
$$
\cM(x)=(\sigma_1(x),\ldots,\sigma_k(x))
$$ 
produces a lattice $\Lambda_{\cO_F} = \cM(\cO_F)$ in $\bR^k$.
For any $x\in \cO_F$, the point $\cM(x)$  
has the property that
$\sigma_1(x) \cdots \sigma_k(x)= N_{F/\bQ}(x)\in \bZ$,
where $N_{F/\bQ}(x)$ is the field norm of $x$, so if $x \ne 0$ then $ \vert \sigma_1(x) \cdots \sigma_k(x)\vert \geq 1$.
Rescaling $\Lambda_{\cO_F}^*$ gives rise to a unimodular lattice $\Lambda$ 
with $ \phi$ in \eqref{eq: lattice dio lower bound} being constant.

This example shows that the smooth lattice discrepancy can be bounded. We do not know whether this can occur for the smooth discrepancy of a sequence in $d \ge 2$ dimensions, but we know from Theorem \ref{thm: Ehrenfest}
that the classical discrepancy of any sequence must be unbounded. See Question \ref{MainOpenQuestion}.
\end{remark}

\subsubsection{Deduction of Theorems \ref{thm: main} and 
\ref{thm: lower} from Theorems \ref{thm: main general lattice} and \ref{thm: lower general lattice}}

Let 
$$
\mathbb{I}_{k-1}=\mathrm{diag}(1,\ldots,1)
$$
be the identity matrix in $k-1$ dimensions.
To study the diophantine 
properties of the column vector $\balp\in\bR^{k-1}$, 
we consider the \emph{Dani lattice}
\begin{equation*}
\label{def Dani lattice}
\Lam_\balp :=
\begin{pmatrix}
\mathbb{I}_{k-1} & \balp\\
\bzero^T & 1 \\
\end{pmatrix} 
\bZ^k.
\end{equation*}
To proceed, we notice that 
$$D_{\bomega}(\balp; N)=D_{\bomega}(\Lam_{\balp}; N)
$$
and
\begin{equation*}
\label{eq dual Dani}
\Lam_\balp^{*} =
\begin{pmatrix}
\mathbb{I}_{k-1} & \bzero \\
-\balp^{T} & 1 \\
\end{pmatrix} 
\bZ^k.
\end{equation*}
Moreover, we see that \eqref{def phi} and \eqref{eq: lattice dio lower bound} 
are compatible. 
Thus, Theorem \ref{thm: main} follows
from Theorem \ref{thm: main general lattice}.
In the same way, Theorem \ref{thm: lower} is implied by 
Theorem \ref{thm: lower general lattice}.

\subsubsection{On a special case of Margulis's conjecture}

Cassels and Swinnerton-Dyer \cite{CSD1955} derived Littlewood's conjecture from the $n=3$ case of the following statement.

\begin{conj}
[Littlewood's conjecture for products of linear forms]
\label{Marg1}
Let $f$ be a product of $k$ linear forms in $\bR^k$, where $k \ge 3$, and assume that $f$ is not proportional to a multiple of a polynomial with integer coefficients. Then
\[
\inf \{ |f(\bx)|: \bzero \ne \bx \in \bZ^k \} = 0.
\]
\end{conj}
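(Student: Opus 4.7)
The plan is to translate Conjecture \ref{Marg1} into the language of smooth lattice discrepancy, in direct parallel with how Theorem \ref{equiv} recasts Littlewood's conjecture for Kronecker sequences. Given a product of $k$ linear forms $f(\bx) = L_1(\bx) \cdots L_k(\bx)$ in $\bR^k$, I would build the matrix $A$ whose $i$-th row is the coefficient vector of $L_i$, so that $f(\bx) = \prod_i (A\bx)_i$. After rescaling by $|\det A|^{-1/k}$ (which only rescales $f$ by a nonzero constant), we may assume $\Lambda := A\bZ^k$ is unimodular, and then
\[
\inf_{\bzero \ne \bx \in \bZ^k} |f(\bx)| = \inf_{\bzero \ne \blam \in \Lambda} |\lam_1 \cdots \lam_k|.
\]
The hypothesis that $f$ is not proportional to an integral form corresponds, via the totally-real construction in the remark after Theorem \ref{thm: lower general lattice}, to $\Lambda$ failing to be commensurable to the Minkowski embedding of any totally real order.

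The next step is to bridge the homogeneous product $\prod_i |\lam_i|$ with the quantity $H(\blam) |\lam_k|$ that features in \eqref{eq: lattice dio lower bound}. On lattice vectors with every $|\lam_i| \geq 1$ the two quantities agree, and the short vectors can be pushed into this regime by applying a diagonal element of $\mathrm{SL}_k(\bR)$: since $|\lam_1 \cdots \lam_k|$ is invariant under $\mathrm{diag}(e^{t_1},\ldots,e^{t_k})$ with $\sum_i t_i = 0$, a uniform positive lower bound for the product over $\Lambda \setminus \{\bzero\}$ transfers, after choosing $\bt$ appropriately, to the constant-$\phi$ hypothesis of Theorem \ref{thm: main general lattice} applied to (the dual of) the flowed lattice. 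Running Theorems \ref{thm: main general lattice} and \ref{thm: lower general lattice} across the $\mathrm{SL}_k(\bR)$-orbit of $\Lambda$ should then yield the desired reformulation: Conjecture \ref{Marg1} is equivalent to the statement that every $\Lambda \in \sL_k$ which is not commensurable with a totally-real number-field lattice has unbounded $C^3$-smooth lattice discrepancy somewhere along its $\mathrm{SL}_k(\bR)$-orbit.

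The main obstacle is that this reformulation does not itself resolve the conjecture; it trades one hard problem for another. The payoff is that the new problem admits harmonic-analytic tools --- Poisson summation on $\Lambda$, stationary-phase analysis of the smoothed counting function, and optimisation over $\bome \in \cG_{k,\ell}$ --- that have no direct Diophantine counterpart. The genuinely deep step, morally corresponding to the measure-rigidity input in the Einsiedler--Katok--Lindenstrauss treatment of Littlewood, is to show that bounded $C^3$-smooth lattice discrepancy forces algebraic rigidity of $\Lambda$; this is where I would expect all of the difficulty to concentrate, and where the new smooth framework might provide fresh leverage through test-function optimisation that is simply unavailable for indicator functions.
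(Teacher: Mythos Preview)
Conjecture \ref{Marg1} is an open problem; the paper does not prove it. What the paper does is formulate a new Conjecture \ref{Marg3} and show, in three lines, that Conjecture \ref{Marg3} implies Conjecture \ref{Marg1}. That deduction is much simpler than your proposal: given $\eps > 0$, Conjecture \ref{Marg3} asserts that $D_\bome(\Lam;N)$ is unbounded, so the contrapositive of Theorem \ref{thm: main general lattice} with constant $\phi \equiv 1/\eps$ yields some nonzero $(\blam,\lam_k) \in \Lam^*$ with $|H(\blam)\lam_k| < \eps$, and then the \emph{trivial} inequality
\[
|\lam_1 \cdots \lam_k| \le |\lam_k| \prod_{i \le d} \max(1,|\lam_i|) = |H(\blam)\lam_k|
\]
finishes. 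No diagonal flow is required for this direction, because passing from small $H(\blam)|\lam_k|$ to small $|\lam_1 \cdots \lam_k|$ is automatic.

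Your proposal is aiming at something more ambitious: a two-sided \emph{equivalence} between Conjecture \ref{Marg1} and a smooth-discrepancy statement. For the reverse implication you correctly observe that small $|\lam_1 \cdots \lam_k|$ need not give small $H(\blam)|\lam_k|$ when some $|\lam_i| < 1$, and that flowing by the diagonal in $\mathrm{SL}_k(\bR)$ is the natural remedy. This is a reasonable line of thought and is indeed how one connects the Margulis picture to discrepancy, but it goes beyond what the paper establishes. Two concrete issues: (i) your final statement speaks of the ``$\mathrm{SL}_k(\bR)$-orbit of $\Lambda$'', but that orbit is all of $\sL_k$; you mean the orbit of the diagonal subgroup $D$. (ii) The identification of ``$f$ proportional to an integral form'' with ``$\Lambda$ commensurable to a Minkowski lattice of a totally real order'' is itself a nontrivial structure theorem and should not be treated as a definition.

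In summary: your proposal is a research programme towards a stronger reformulation, and you openly say so. The paper's contribution on this point is only the easy one-way implication Conjecture \ref{Marg3} $\Rightarrow$ Conjecture \ref{Marg1}, obtained by the one-line inequality above together with Theorem \ref{thm: main general lattice}; neither you nor the paper proves Conjecture \ref{Marg1}.
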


This is equivalent to the following special case of a famous problem in homogeneous dynamics, namely Margulis's conjecture \cite[Conjecture 9]{Mar2000}, as Margulis notes in that reference.

\begin{conj}
[Special case of Margulis's conjecture]
\label{Marg2}
Let $D$ be the set of diagonal matrices in $\mathrm{SL}_k(\bR)$, where $k \ge 3$.
If $z \in \mathrm{SL}_k(\bR)/ \mathrm{SL}_k(\bZ)$ and $Dz$ has compact closure, then $Dz$ is closed.
\end{conj}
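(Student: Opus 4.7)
The plan is to use the local-to-global principle (Theorem \ref{thm: main general lattice}) as the engine, supplemented by measure-rigidity techniques from homogeneous dynamics. The first task is to convert the topological hypothesis of compact closure into the Diophantine hypothesis required by Theorem \ref{thm: main general lattice}. Identify $z$ with a unimodular lattice $\Lambda \in \sL_k$, and invoke Mahler's compactness criterion for the diagonal $D$-action: $\overline{Dz}$ is compact if and only if $N(\Lambda) := \inf_{\bzero \ne \bv \in \Lambda} |v_1 \cdots v_k|$ is strictly positive, and by symmetry of Mahler's criterion under passage to the dual, the same holds for $\Lambda^*$. Since $H(\blam) |\lam_k| \ge |\lam_1 \cdots \lam_d \lam_k| \ge N(\Lambda^*)$ for every nonzero $(\blam, \lam_k) \in \Lambda^*$, condition \eqref{eq: lattice dio lower bound} holds with $\phi$ a constant, \eqref{eq: phi doubling} holds trivially with $\xi = 1$, and Theorem \ref{thm: main general lattice} yields $\sup_N D_{\bomega}(\Lambda; N) < \infty$ for any $\bomega \in \cG_{k,\ell}$ with $\ell \ge 3$.

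The second task is to promote this bounded smooth discrepancy of $\Lambda$ --- and, by $D$-invariance of the hypothesis, of every lattice in $\overline{D\Lambda}$ --- into an algebraicity statement strong enough to force closedness. The remark following Theorem \ref{thm: lower general lattice} supplies the anticipated model examples: rescaled duals of Minkowski embeddings of orders in totally real number fields of degree $k$, whose diagonal orbits are well known to be closed. I would try to show that these are essentially the only examples by constructing, via Ces\`aro averages along the orbit, a $D$-invariant probability measure $\mu$ supported on $\overline{D\Lambda}$, classifying $\mu$ with the Einsiedler--Katok--Lindenstrauss measure-rigidity theorem, and descending to an algebraic identification of $\Lambda$ in the spirit of Cassels--Swinnerton-Dyer.

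The main obstacle is squarely in the second step: EKL only classifies $D$-invariant measures of positive entropy along some one-parameter subgroup, and $\mu$ need not a priori carry such entropy. This is precisely the stumbling block that leaves Littlewood's conjecture open, and indeed Theorem \ref{equiv} shows that the $k=3$ case of the present task is equivalent to Littlewood. A plausible line of attack is to exploit the fact that compact closure yields the much stronger $O(1)$ bound on the smooth discrepancy, rather than the generic $O((\log N)^{d-1} \log \log N)$: expanding $|D_{\bomega}(\Lambda; N)|^2$ via a Poisson-summation or Fourier decomposition should expose non-trivial correlations between short vectors of $\Lambda^*$, which one might then convert into an entropy lower bound along some diagonal one-parameter subgroup. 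Executing this would simultaneously resolve Littlewood's conjecture through Theorem \ref{equiv}, which is the transparent signal that the step is genuinely deep and explains why this conjecture remains open.
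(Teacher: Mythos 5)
The statement you have been asked to prove is Conjecture \ref{Marg2}; the paper offers no proof of it, because none is known. It is a special case of Margulis's conjecture, equivalent to Conjecture \ref{Marg1}, whose $k=3$ case already implies Littlewood's conjecture. Your proposal is therefore to be judged as a research programme, and it contains a genuine gap at exactly the point you identify yourself.

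Your first step is essentially sound: Mahler's compactness criterion converts compactness of $\overline{Dz}$ into $\inf\{|v_1\cdots v_k| : \bzero \ne \bv \in \Lam\} > 0$; the dual lattice inherits this property because duality is a continuous, proper self-map of $\sL_k$ intertwining the $D$-action with itself via $g \mapsto (g^{-1})^T$; hence \eqref{eq: lattice dio lower bound} holds with constant $\phi$ and Theorem \ref{thm: main general lattice} gives $\sup_N D_\bome(\Lam;N) < \infty$. But this direction of the local-to-global principle merely trades one open-ended hypothesis for another: the paper's Conjecture \ref{Marg3} asserts precisely that bounded smooth discrepancy is impossible for non-integral $f$, and that is where the entire difficulty lives. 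Your second step --- constructing a $D$-invariant measure on the compact set $\overline{D\Lam}$ by Ces\`aro averaging and classifying it via Einsiedler--Katok--Lindenstrauss --- fails for the reason you state: EKL applies only to ergodic components with positive entropy under some one-parameter diagonal subgroup, and no mechanism is known for extracting positive entropy from a compact orbit closure (equivalently, from an $O(1)$ smooth discrepancy bound); the zero-entropy case is wide open. The suggestion that a second-moment expansion of $D_\bome(\Lam;N)$ would expose correlations convertible into an entropy lower bound is speculative and carries no argument; if it could be executed, it would resolve Littlewood's conjecture. The proposal is honest about this, but it is not a proof, and cannot be credited as one.
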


We present the following refinement of Conjectures \ref{Marg1} and \ref{Marg2}.

\begin{conj}
\label{Marg3}
Let $\Lam = A \bZ^k \in \sL_k$, where $A \in \mathrm{SL}_k(\bR)$ and $k \ge 3$. Let $f$ be the product of the linear forms defined by the rows of $(A^{-1})^T$, and assume that $f$ is not proportional to a polynomial with integer coefficients. Let $\phi: [1, \infty) \to [1,\infty)$ be a non-decreasing function satisfying \eqref{eq: phi doubling} and \eqref{eq: lattice dio lower bound}.
Let $\bome \in \cG_{k,\ell}$, where $\ell$ satisfies \eqref{eq smoothness}. Then $D_\bome(\Lam; N)$ is an unbounded function of $N$.
\end{conj}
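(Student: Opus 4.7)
The plan is to derive Conjecture \ref{Marg3} from Theorem \ref{thm: lower general lattice} by reducing it to a diophantine input that is a strengthening of Conjecture \ref{Marg1} (equivalently, of the special case of Margulis's conjecture in Conjecture \ref{Marg2}). First, I would take a non-decreasing auxiliary function $\psi: [1,\infty) \to [1,\infty)$ and apply Theorem \ref{thm: lower general lattice} with $\psi$ in place of $\phi$: if
$$
\vert H(\blam) \lam_k \vert < \frac{1}{\psi(H(\blam))}
$$
holds for infinitely many $(\blam, \lam_k) \in \Lam^{*}$, then $D_{\bome}(\Lam; N) \gg \psi(L_\psi(N))$ for infinitely many $N$, where $L_\psi$ is defined by $L_\psi(x) \psi(L_\psi(x)) = x$. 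Since $L_\psi(N) \to \infty$ unconditionally, any unbounded such $\psi$ would force $D_{\bome}(\Lam; N)$ to be an unbounded function of $N$. So the task is to construct an unbounded non-decreasing $\psi$ for which \eqref{DualGood lattice} holds infinitely often on $\Lam^{*}$.

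The natural candidate comes from the dual-lattice translation of Littlewood's conjecture for products of linear forms. Writing $\Lam^{*} = (A^{-1})^T \bZ^k$ and choosing $\bm \in \bZ^k$, the $k$ linear forms defining $f$ give $f(\bm) = \lam_1 \cdots \lam_k$ for $\lam = (A^{-1})^T \bm$. The hypothesis that $f$ is not proportional to an integer-coefficient polynomial is exactly the hypothesis of Conjecture \ref{Marg1}, which would yield $\inf_{\bzero \ne \lam \in \Lam^{*}} \vert \lam_1 \cdots \lam_k \vert = 0$. I would then attempt to upgrade this qualitative statement to
$$
\liminf_{\substack{\lam \in \Lam^{*} \\ H(\blam) \to \infty}} H(\blam) \vert \lam_k \vert = 0,
$$
from which the desired $\psi$ can be built by non-decreasing interpolation along the witnessing subsequence, after which the reduction in the first step closes out.

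The principal obstacle is the gap between Littlewood-type smallness of $\vert \lam_1 \cdots \lam_k \vert$ and smallness of $H(\blam) \vert \lam_k \vert$. From $H(\blam) \ge \vert \lam_1 \cdots \lam_{k-1} \vert$ one trivially obtains $\vert \lam_1 \cdots \lam_k \vert \le H(\blam) \vert \lam_k \vert$, but the reverse inequality can fail dramatically: a single coordinate with $\vert \lam_i \vert \ll 1$ inflates the ratio $H(\blam) / \vert \lam_1 \cdots \lam_{k-1} \vert$ by a factor of $1 / \vert \lam_i \vert$. To rule out such imbalances, I would turn to homogeneous dynamics: the non-integrality hypothesis forces the diagonal $D$-orbit of $\Lam^{*}$ in $\mathrm{SL}_k(\bR)/\mathrm{SL}_k(\bZ)$ to be non-compact, and Mahler's criterion together with the arrangement of short vectors in nearby lattices in the orbit closure should yield sequences of dual lattice points whose coordinate profiles satisfy $\vert \lam_i \vert \gtrsim 1$ for $i < k$ on a subsequence, converting smallness of the product into smallness of $H(\blam) \vert \lam_k \vert$ with $H(\blam) \to \infty$. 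The true substantive difficulty is that completing this step requires not merely Conjecture \ref{Marg1} but a quantitative geometric refinement of it, so any proof of Conjecture \ref{Marg3} along these lines would necessarily constitute a major advance on the Littlewood circle of problems.
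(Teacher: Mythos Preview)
The statement is Conjecture \ref{Marg3}, and the paper does \emph{not} prove it; it is posed as an open refinement of Conjectures \ref{Marg1} and \ref{Marg2}. The only argument the paper gives around this statement is the implication Conjecture \ref{Marg3} $\Rightarrow$ Conjecture \ref{Marg1}, obtained via the contrapositive of the \emph{upper}-bound Theorem \ref{thm: main general lattice}: if Conjecture \ref{Marg1} failed for $f$, then $|H(\blam)\lam_k| \ge |\lam_1\cdots\lam_k| \ge c > 0$ for every nonzero $(\blam,\lam_k)\in\Lam^*$, so one could take $\phi$ constant in Theorem \ref{thm: main general lattice} and conclude $D_\bome(\Lam;N)\ll 1$, contradicting Conjecture \ref{Marg3}.

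Your proposal runs in the opposite direction, attempting to prove Conjecture \ref{Marg3} itself by feeding a diophantine input into the \emph{lower}-bound Theorem \ref{thm: lower general lattice}. The reduction you describe is sound, and you have correctly located the obstacle: one would need $H(\blam)|\lam_k|$ to be small infinitely often, which is genuinely stronger than $|\lam_1\cdots\lam_k|$ being small, because any coordinate with $|\lam_i|<1$ inflates the ratio $H(\blam)/|\lam_1\cdots\lam_d|$. That gap is exactly why the paper leaves this as a conjecture; closing it would already settle Conjecture \ref{Marg1}. So nothing in your write-up is wrong, but it is not a proof --- it is an accurate explanation of why Conjecture \ref{Marg3} is at least as deep as the Littlewood/Margulis problems, which is consistent with the paper's positioning of it as a refinement rather than a theorem.
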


We deduce Conjecture \ref{Marg1} from Conjecture \ref{Marg3} as follows. Let $ \eps > 0$. Conjecture \ref{Marg3} tells us that $D_\bome(\Lam;N)$ is unbounded. Then, by Theorem \ref{thm: main general lattice}, there exists $(\blam, \lam_k) \in \Lam^* \setminus \{ \bzero \}$ such that
\[
|\lam_1 \cdots \lam_k| \le |H(\blam) \lam_k| < \eps.
\]
The deduction is complete since $\lam_1 \cdots \lam_k = f(\bx)$ for some non-zero $\bx \in \bZ^k$.

\subsection*{Organisation}

In \S \ref{prep}, we use the geometry of numbers to provide a sharp cardinality bound for Bohr sets. We will prove Theorems \ref{thm: main general lattice} and \ref{thm: lower general lattice} in Sections \ref{MainProof} and \ref{LowerProof}, respectively, before offering some concluding remarks in \S \ref{ConcludingRemarks}.

\section{The geometry of numbers and Bohr sets}
\label{prep}

We start by recalling some classical theory from the geometry of numbers. We then finish the section by using it to prove a sharp cardinality bound for Bohr sets.

\subsection{The geometry of numbers}
Let $\Gamma$ be a lattice in $\bR^k$.
The set 
$$
\{ t_1 \bv^{(1)} + \cdots + t_k \bv^{(k)}: 
0 \le t_1, \ldots, t_k < 1 \}
$$
is a \emph{fundamental region} 
for $\Gamma$ if $\bv^{(1)}, \ldots, \bv^{(k)}$ is a basis for $\Gamma$.
The volume of any fundamental region of $\Gamma$ is equal to 
$\det(\Gamma)$. 
An \emph{admissible body} is
a bounded, convex set $\cC \subset \bR^k$ of positive volume that is symmetric about the origin, 
i.e. if $\bc \in \cC$ then 
$-\bc\in \cC$.
For $i=1,2,\ldots,k$, we denote the $i^{th}$ successive minimum 
of $\Gamma$ with respect to $\cC$ by 
$$
\lambda_i(\Gamma,\cC):= \min\{r>0: 
r \cC \cap \Gamma \mathrm{\,contains\,}i
\mathrm{\, linearly\,independent\,vectors} \}.
$$
We will use the following standard estimate, which is usually attributed to Blichfeldt, see \cite[Corollary 2.10]{Wid2012}.

\begin{lem}
\label{le simple Blichfeld}
Let $\cC \subset \bR^k$ be an admissible body, and let $\Lambda$ be a unimodular lattice in 
$\bR^k$ for which $\lambda_{k}(\Lambda,\cC)\leq 1$. 
Then 
$
\#(\cC\cap \Lambda)
\ll_k \mathrm{vol}(\cC).
$
\end{lem}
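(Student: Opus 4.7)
The plan is to combine two classical ingredients from the geometry of numbers: an upper bound for the number of lattice points in a symmetric convex body in terms of the successive minima (the Betke--Henk refinement of Blichfeldt), and Minkowski's second theorem.

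First I would invoke the inequality
\[
\#(\cC \cap \Lambda) \le \prod_{i=1}^{k} \left\lfloor \frac{2}{\lambda_i(\Lambda,\cC)} + 1 \right\rfloor,
\]
which is the standard Betke--Henk type bound (this is essentially the content of \cite[Corollary 2.10]{Wid2012}, and it is a refinement of Blichfeldt's classical counting lemma). Under our hypothesis $\lambda_k(\Lambda,\cC) \le 1$, the successive minima satisfy $\lambda_1 \le \cdots \le \lambda_k \le 1$, so each factor $\lfloor 2/\lambda_i + 1 \rfloor$ is bounded above by $3/\lambda_i$. Consequently,
\[
\#(\cC \cap \Lambda) \ll_k \prod_{i=1}^{k} \frac{1}{\lambda_i(\Lambda,\cC)}.
\]

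Next I would apply Minkowski's second theorem, which states
\[
\frac{2^k}{k!}\,\det(\Lambda) \le \lambda_1(\Lambda,\cC) \cdots \lambda_k(\Lambda,\cC) \cdot \vol(\cC) \le 2^k \det(\Lambda).
\]
Since $\Lambda$ is unimodular, the left inequality gives
\[
\prod_{i=1}^{k} \frac{1}{\lambda_i(\Lambda,\cC)} \le \frac{k!}{2^k} \vol(\cC) \ll_k \vol(\cC).
\]
Chaining the two estimates yields the claim $\#(\cC \cap \Lambda) \ll_k \vol(\cC)$.

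There is no real obstacle: both inputs are textbook, and the bulk of the work was already encoded in invoking the reference \cite{Wid2012}. If one wished to avoid citing the Betke--Henk bound explicitly, a self-contained alternative would be to translate each lattice point of $\cC \cap \Lambda$ by half of a basis realising the successive minima, thereby packing disjoint scaled copies of $\tfrac12(\lambda_1 \cC)$ into a bounded dilate of $\cC$, and then compare volumes via $\vol(\lambda_1 \cdots \lambda_k \cC) \gg_k 1$ from Minkowski's second theorem; this standard packing argument gives the same conclusion with explicit constants depending only on $k$.
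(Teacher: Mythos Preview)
Your argument is correct. The paper itself does not supply a proof of this lemma at all; it simply states the estimate as standard and refers to \cite[Corollary~2.10]{Wid2012}. What you have written is a clean unpacking of that citation: the Betke--Henk-type count combined with Minkowski's second theorem is exactly the mechanism behind the quoted result, so your proposal is in line with the paper's (implicit) approach rather than a genuinely different route.
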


To utilise the previous lemma, we recall that the dual body of $\cC$ is
$$
\cC^{*}:=
\{ \bx \in \bR^k: 
\langle \bx, \bc \rangle \le 1
\: (\bc\in \cC)
\}.
$$
The relevance is demonstrated by the 
following case 
of Mahler's relations \cite[Theorem 23.2]{Gru2007}.

\begin{thm}
\label{thm Mahler rel}
There exists $c(k)>1$ such that 
$$
1\leq 
\lambda_{k}(\Gamma,\cC) 
\lambda_{1}(\Gamma^{*},\cC^{*}) 
\leq c(k)
$$
holds for any lattice $\Gamma$ in $\bR^k$
and any admissible body $\cC \subset \bR^k$.
\end{thm}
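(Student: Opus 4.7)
The plan is to prove the two inequalities separately, exploiting the duality between $\Gamma$ and $\Gamma^*$ and between $\cC$ and $\cC^*$.

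For the lower bound $\lambda_k(\Gamma, \cC)\lambda_1(\Gamma^*, \cC^*) \geq 1$, the natural tool is the integrality of the duality pairing. I will set $r = \lambda_k(\Gamma, \cC)$ and $s = \lambda_1(\Gamma^*, \cC^*)$, pick linearly independent $\bv_1, \ldots, \bv_k \in \Gamma \cap r\cC$ and a non-zero $\bw \in \Gamma^* \cap s\cC^*$, and observe that since the $\bv_i$ span $\bR^k$ and $\bw \neq \bzero$, at least one integer $\langle \bw, \bv_i \rangle$ must be non-zero, hence has absolute value $\geq 1$. On the other hand $r^{-1}\bv_i \in \cC$ and $s^{-1}\bw \in \cC^*$, so by definition of the dual body $|\langle \bw, \bv_i\rangle| \leq rs$, which yields $rs\geq 1$. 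A mild dimension-counting variant of the same argument (replace $\bv_1,\ldots,\bv_k$ by $i$ independent vectors from $\Gamma\cap\lambda_i(\Gamma,\cC)\cdot\cC$ and $\bw$ by $k+1-i$ independent vectors from $\Gamma^*\cap\lambda_{k+1-i}(\Gamma^*,\cC^*)\cdot\cC^*$, then note that the two span complementary dimensions that overflow $k$) yields the more general inequality $\lambda_i(\Gamma, \cC)\lambda_{k+1-i}(\Gamma^*, \cC^*) \geq 1$ for every $i \in \{1,\ldots,k\}$.

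For the upper bound, I will apply Minkowski's second theorem to both $(\Gamma, \cC)$ and $(\Gamma^*, \cC^*)$,
$$
\prod_{i=1}^k \lambda_i(\Gamma, \cC) \asymp_k \frac{\det \Gamma}{\vol(\cC)}, \qquad \prod_{i=1}^k \lambda_i(\Gamma^*, \cC^*) \asymp_k \frac{\det \Gamma^*}{\vol(\cC^*)},
$$
then multiply and use $\det \Gamma \cdot \det \Gamma^* = 1$ together with the standard bound $\vol(\cC)\vol(\cC^*) \asymp_k 1$ for origin-symmetric convex bodies (Blaschke--Santal\'o from above, Bourgain--Milman from below), to conclude
$$
\prod_{i=1}^k \lambda_i(\Gamma, \cC)\lambda_{k+1-i}(\Gamma^*, \cC^*) \;=\; \prod_{i=1}^k \lambda_i(\Gamma, \cC)\lambda_i(\Gamma^*, \cC^*) \;\asymp_k\; 1.
$$
Since each of the $k$ factors on the left is $\geq 1$ by the previous paragraph, each is also at most some $c(k)$; specialising to $i=k$ delivers the stated upper bound.

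The main obstacle is the upper bound, and specifically the observation that transference for an \emph{individual} pair of dual successive minima is extracted from a \emph{global} volume identity, which is why Minkowski's second theorem and the symmetric-body volume-product bound must enter in tandem. Tracking $c(k)$ sharply in $k$ would be delicate, but the statement only asks for existence of some $c(k)$, so a bookkeeping-free route is acceptable throughout.
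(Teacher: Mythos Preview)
The paper does not supply its own proof of this statement; it is quoted as a classical result from Gruber's textbook \cite[Theorem 23.2]{Gru2007}. Your argument is correct and is essentially the standard derivation of Mahler's transference relations: the lower bound via the integrality of the pairing between $\Gamma$ and $\Gamma^*$, and the upper bound by combining Minkowski's second theorem with a lower bound on the volume product $\vol(\cC)\vol(\cC^*)$, then peeling off the $i=k$ factor using the already-established inequalities $\lambda_i(\Gamma,\cC)\lambda_{k+1-i}(\Gamma^*,\cC^*)\ge 1$.

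One remark: invoking Bourgain--Milman for the lower bound on $\vol(\cC)\vol(\cC^*)$ is substantial overkill. Mahler's own elementary lower bound on the volume product of a symmetric convex body and its polar (which predates Bourgain--Milman by half a century and has a short inductive proof) already suffices, and is in fact how Mahler originally obtained this transference principle. Also note that the Blaschke--Santal\'o direction is not needed at all here: only the \emph{upper} estimate in Minkowski's second theorem and the \emph{lower} estimate on the volume product enter the argument for the upper bound on $\lambda_k(\Gamma,\cC)\lambda_1(\Gamma^*,\cC^*)$.
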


The situation where the admissible body is a 
symmetric, axis-parallel hyper-rectangle
\begin{equation}
\label{def box}
\cP_{\bs}:= 
[-s_1,s_1]
\times\cdots\times 
[-s_k,s_k],
\quad \mathrm{where\,all\,}s_i>0,
\end{equation}
is important for us. The next lemma will facilitate
applications of Theorem \ref{thm Mahler rel}.

\begin{lem}
\label{le dual cube inclusion}
The parallelepiped $\cP_{\bs}$,
given by \eqref{def box}, satisfies 
$\cP_{\bs}^* \subseteq \cP_{\bs^{-1}}$.
\end{lem}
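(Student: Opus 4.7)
The plan is to unpack the definition of the polar body coordinate by coordinate. Fix an arbitrary $\bx = (x_1, \ldots, x_k) \in \cP_{\bs}^*$. To show $\bx \in \cP_{\bs^{-1}}$, it suffices to verify that $|x_i| \le s_i^{-1}$ for each $i \in \{1, \ldots, k\}$.

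For a fixed index $i$, I would test the defining inequality against a carefully chosen vertex of $\cP_{\bs}$. Namely, take
\[
\bc = \sgn(x_i) \, s_i \, \be_i,
\]
where $\be_i$ is the $i^{\mathrm{th}}$ standard basis vector (with the convention $\sgn(0) = 1$, say). Since $|c_i| = s_i$ and $c_j = 0$ for $j \ne i$, we have $\bc \in \cP_{\bs}$. The definition of $\cP_{\bs}^*$ now yields
\[
1 \ge \langle \bx, \bc \rangle = \sgn(x_i) \, s_i \, x_i = s_i |x_i|,
\]
so $|x_i| \le s_i^{-1}$, as required.

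Running this argument for each $i$ gives $\bx \in \cP_{\bs^{-1}}$, completing the proof. There is no real obstacle here: the lemma is a short unpacking of definitions, relying only on the fact that the axis-parallel box $\cP_{\bs}$ contains the rescaled standard basis vectors $\pm s_i \be_i$, which are enough to test membership in the polar coordinatewise.
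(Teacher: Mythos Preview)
Your proof is correct. The paper's argument is essentially equivalent but packaged slightly differently: it writes $\cP_{\bs} = S\cQ$ with $S = \mathrm{diag}(s_1,\ldots,s_k)$ and $\cQ = [-1,1]^k$, observes that $(S\cQ)^* = S^{-1}\cQ^*$, and then invokes the inclusion $\cQ^* \subseteq \cQ$ (the dual of the unit cube is the cross-polytope). Your version skips the reduction to the unit cube and tests directly against the scaled basis vectors $\pm s_i \be_i$, which is exactly what verifies $\cQ^* \subseteq \cQ$ anyway; so the two arguments have the same content, with yours being marginally more direct.
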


\begin{proof}
Put $\cQ = [-1,1]^k$ and
$S =\mathrm{diag}(s_1,\ldots,s_k)$, so that $\cP_{\bs} = S\cQ$.
Then
\begin{align*}
\cP_{\bs}^*  
&= \{ \bm \in \bR^k: 
 \langle S \bm, \by \rangle \le 1
\,\, (\by \in \cQ)  \} \\
&= S^{-1}
\{ \bm \in \bR^k: 
 \langle  \bm, \by \rangle \le 1
\,\, (\by \in \cQ) \} 
= S^{-1} \cQ^*.
\end{align*}
As $\cQ^*\subseteq \cQ$,
the desired inclusion follows.
\end{proof}

\subsection{A sharp size estimate for Bohr sets}

We used Bohr sets for multiplicative diophantine approximation in \cite{Cho2018, CT2019, CT2023, CT2024}. Presently, we require size estimates. The sets that we consider generalise the lifted Bohr sets in \cite{CT2019}.

Let $\brho \in (0,1/2)^d$ and $\bgam \in \bR^k$. A Bohr set 
\[
B := B_\Lam^\bgam(N;\brho)
:= (\Lambda-\bgam)\cap\cP_{(\brho,N)}
\]
is \emph{homogeneous} if $\bgam = \bzero$. Here
$
\Lam - \bgam = \{ \blam - \bgam: \blam \in \Lam \}.
$
For $\Lam \in \cL_k$, the Lipschitz principle suggests that 
$B$ has roughly 
\[
\vol(B) := 2^k \rho_1 \cdots \rho_{d} N
\]
many elements.
The next lemma states that this heuristic delivers 
a correct upper-bound, up to a multiplicative constant,
as soon as $\vol(B)$ is sufficiently large.
\begin{lem} 
[Homogeneous size bound]
\label{outer}
Let $\phi: [1,\infty) \to [1,\infty)$ be a non-decreasing function, 
and suppose $\Lam \in \sL_k$ satisfies \eqref{eq: lattice dio lower bound}. 
Then any Bohr set
$B = B_\Lam^\bzero(N; \brho)$ with
\begin{equation} 
\label{crit}
\vol(B) \ge \phi(L(N)) 
\end{equation}
satisfies 
$\# B \ll \vol(B)$.
\end{lem}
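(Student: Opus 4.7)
The plan is to bound $\lambda_k(\Lam, \cP_{(\brho,N)})$ from above by an absolute constant $\tau = \tau(k)$, and then invoke Blichfeldt (Lemma~\ref{le simple Blichfeld}) on the dilate $\tau \cP_{(\brho,N)}$. Since $\cP_{(\brho,N)} \subseteq \tau \cP_{(\brho,N)}$ for $\tau \ge 1$, this yields $\# B \le \#(\Lam \cap \tau \cP_{(\brho,N)}) \ll_k \tau^k \vol(B) \ll_k \vol(B)$. By the Mahler duality estimate (Theorem~\ref{thm Mahler rel}) coupled with Lemma~\ref{le dual cube inclusion}, it suffices to prove
\[
\lambda_1(\Lam^*, \cP_{(\brho^{-1},N^{-1})}) \gg 1.
\]

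To establish this, fix a small absolute constant $\eta > 0$ (it will transpire that $\eta = 1/8$ works) and assume towards a contradiction that there exists a nonzero $(\blam, \lam_k) \in \Lam^*$ with $\rho_i |\lam_i| \le \eta$ for each $i \le d$ and $N|\lam_k| \le \eta$. Writing $H = H(\blam)$, the dual Diophantine hypothesis \eqref{eq: lattice dio lower bound} yields $|\lam_k| > 1/(H\phi(H))$, hence $H\phi(H) > N/\eta$ and so $H > L(N/\eta)$ by the monotonicity of $t \mapsto t\phi(t)$. Conversely, setting $S = \{ i \le d : \rho_i \le \eta\}$, the box constraint on $|\lam_i|$ together with the definition of $H$ gives
\[
H \le \prod_{i \le d} \max\{1, \eta/\rho_i\} = \eta^{|S|} \prod_{i \in S} \rho_i^{-1}.
\]

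The final step marries these with the volume hypothesis. Since $\rho_i < 1/2$ for $i \notin S$, one has $\prod_{i \notin S} \rho_i < 2^{-(d-|S|)}$, and from $\vol(B) = 2^k (\prod_{i \le d} \rho_i) N \ge \phi(L(N)) = N/L(N)$ I deduce $\prod_{i \in S} \rho_i \ge 2^{-|S|-1}/L(N)$. Substituting into the upper bound for $H$ yields $L(N/\eta) < H \le 2(2\eta)^{|S|} L(N)$. For $\eta \le 1$ one has $L(N/\eta) \ge L(N)$, so $(2\eta)^{|S|} > 1/2$: when $|S| \ge 1$ this forces $\eta > 2^{-1-1/|S|} \ge 1/4$, contradicting $\eta = 1/8$. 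The residual case $|S| = 0$ means every $|\lam_i| < 1$, so $H = 1$, whence \eqref{eq: lattice dio lower bound} gives $|\lam_k| > 1/\phi(1)$; combined with $|\lam_k| \le \eta/N$ this forces $N < \eta\phi(1) < \phi(1)$, violating the implicit requirement $N \ge \phi(1)$ built into the definition of $L(N)$. The main delicacy is precisely this case split on $|S|$, forced by the possibility that $\cP_{(\brho^{-1},N^{-1})}$ is so thin in certain coordinates that the bound $|\lam_i| \le \eta/\rho_i$ is vacuous there; without isolating those directions, the naive estimate $H \le \eta^d/R$ fails to close the argument for general $\phi$.
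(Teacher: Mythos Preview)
Your proof is correct and follows the same route as the paper: bound $\lambda_k(\Lambda,\cP_{(\brho,N)})$ via Mahler's inequality and Lemma~\ref{le dual cube inclusion}, reducing to the claim that $\Lambda^*\cap\eta\,\cP_{(\brho^{-1},N^{-1})}=\{\bzero\}$ for a small constant~$\eta$, then invoke Blichfeldt.

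The one genuine difference is your case split on $|S|$. The paper avoids it by exploiting the standing hypothesis $\rho_i<1/2$, which forces $\max\{1,\eta/\rho_i\}\le 1/(2\rho_i)$ for every $i$ once $\eta\le 1/2$. This yields directly $H(\blam)\le 2^{-d}H(\brho^{-1})$, and then the volume assumption gives $H(\brho^{-1})\le 2^k L(N)$, so $H(\blam)\phi(H(\blam))\ll N$ and hence $|\lam_k|\gg 1/N$, contradicting membership in $\eta\,\cP_{(\brho^{-1},N^{-1})}$. Thus the ``vacuous coordinate'' concern you flag does not arise: the bound $|\lam_i|\le \eta/\rho_i$ is never truly vacuous because $1/(2\rho_i)>1$ absorbs the slack. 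Your argument works, but the split and the appeal to $L(N/\eta)$ are not needed.
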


\begin{remark}
Using a standard trick, we will see at the end of \S \ref{SmallScalesPart} that this result extends to inhomogeneous Bohr sets, at the cost of replacing $\phi(L(N))$ by $\phi(L(2N))$ in the assumption \eqref{crit}.
\end{remark}

We introduce some notation before proceeding with the proof.
For $\bx,\by\in \bR^d$, we denote component-wise inequality
in absolute value of $\bx,\by$ via
$$
\bx \preccurlyeq \by \qquad
\Leftrightarrow \qquad
\vert x_i \vert \leq \vert y_i\vert  
\quad (1\leq i\leq d).
$$
Moreover, we abbreviate
$$
\brho^{-1}:=(\rho_1^{-1},
\ldots, \rho_{d}^{-1}).
$$

\begin{proof} 
[Proof of Lemma \ref{outer}]
Put $\bs = (\brho,N)$.
Suppose we knew that
\begin{equation}
\label{eq: last min bounded}
\lambda_k(\Lambda,\cP_{\bs})\ll_k 1.
\end{equation}
Then, after enlarging $\cP_{\bs}$
by a constant $1<K\ll_k 1$, 
Lemma \ref{le simple Blichfeld}
would give
$ \# (\Lambda \cap K \cP_{\bs})
\ll \mathrm{vol}(\cP_{\bs})=\mathrm{vol}(B)$,
and the proof would be complete.

In view of Theorem \ref{thm Mahler rel},
the estimate \eqref{eq: last min bounded}
will follow if we can show that
$\lambda_1(\Lambda^{*},\cP_{\bs}^{*})\gg_k 1$.
Lemma \ref{le dual cube inclusion} implies that
$\lambda_1(\Lambda^{*},\cP_{\bs}^{*})
\geq \lambda_1(\Lambda^{*},\cP_{\bs^{-1}}).$
We will prove \emph{a fortiori} that
$\lambda_1(\Lambda^{*}, \cP_{\bs^{-1}}) > 1/4$,
by demonstrating that
\[
\Lambda^{*} \cap \frac14 \cP_{\bs^{-1}}= \{ \bzero \}.
\]

By the diophantine assumption \eqref{eq: lattice dio lower bound},
any non-zero vector $ (\blam,\lambda_k)\in \Lambda^*$ with 
$\blam\preccurlyeq \frac14 \brho^{-1}$ satisfies
$$
\vert \lambda_k \vert  > 
\frac1{H(\blam) \phi(H(\blam))} \geq 
\frac{2^k}{H(\brho^{-1}) \phi(2^{-k} H(\brho^{-1}))}
= 
\frac{\mathrm{vol}(B)}
{N\phi(2^{-k} H(\brho^{-1}))}.
$$
By \eqref{crit}, we have
$H(\brho^{-1}) \leq 2^k N / \phi(L(N)) = 2^k L(N)$,
whence
\[
\vert \lambda_k \vert > 
\frac{\mathrm{vol}(B)}
{N\phi(L(N))} \geq \frac1N
= \frac1{s_k}.
\]
Therefore 
$\Lambda^{*} \cap \frac14 \cP_{\bs^{-1}} = 
\{ \bzero \}$, 
as required.
\end{proof}

\section{Proof of Theorem \ref{thm: main general lattice}}
\label{MainProof}

We split the derivation of Theorem 
\ref{thm: main general lattice} into two cases.
Boxes $\fB$ of small volume are estimated 
in Lemma \ref{lem discr small box}, and 
Proposition \ref{prop discr large box}
deals with those of large volume.

\subsection{Discrepancy at small scales: inflation
and Bohr sets}
\label{SmallScalesPart}

\begin{lem}
\label{lem discr small box}
If $\vol(\fB; N) \le \phi(L(N))$, then 
$D_{\bomega}^{\Lambda}(\fB; N) \ll \phi(L(N))$.
\end{lem}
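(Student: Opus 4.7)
The plan is to use the triangle inequality, bound the smooth lattice sum by a count of lattice points in a Bohr set, and then invoke the inhomogeneous variant of Lemma \ref{outer} promised in the remark following that lemma.

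The triangle inequality gives
\[
|D_{\bomega}^{\Lambda}(\fB; N)| \le S + \fC(\bomega) \vol(\fB; N),
\]
where $S$ denotes the smooth sum. The main term satisfies $\fC(\bomega) \vol(\fB;N) \ll \vol(\fB;N) \le \phi(L(N))$ by hypothesis. For $S$, each $\omega_i$ is non-negative, bounded, and supported in $[-2,2]$, so
\[
S \ll \#B, \qquad B := (\Lambda - \bgam') \cap \cP_{(2\brho, 2N)},
\]
where $\bgam' := (\bgam, 0) \in \bR^k$. Thus $B$ is an inhomogeneous Bohr set of parameter $2\brho$ and depth $2N$, of set-theoretic volume $4^k \vol(\fB;N)$.

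The task reduces to proving $\#B \ll \phi(L(N))$. Split into two cases. If $4^k \vol(\fB;N) \ge \phi(L(4N))$, the inhomogeneous version of Lemma \ref{outer} gives $\#B \ll \vol(B) \ll \vol(\fB;N) \le \phi(L(N))$. Otherwise, inflate only the $\brho$-coordinates by
\[
K := \left(\frac{\phi(L(4N))}{4^k \vol(\fB;N)}\right)^{1/d} > 1,
\]
producing $B' := (\Lambda - \bgam') \cap \cP_{(2K\brho, 2N)}$ of volume exactly $\phi(L(4N))$. Since $B \subseteq B'$, applying the inhomogeneous Lemma \ref{outer} to $B'$ gives $\#B \le \#B' \ll \phi(L(4N))$. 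To close the loop, combine \eqref{eq: phi doubling} with the inequality $L(4N) \le 4 L(N)$ (which follows from the defining relation $H\phi(H) = x$ and the monotonicity of $\phi$) to deduce $\phi(L(4N)) \ll \phi(L(N))$.

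The main technical wrinkle is that inflation may push $2K\brho$ outside $(0, 1/2)^d$, formally violating the hypotheses of Lemma \ref{outer}. However, inspection of its proof shows that the normalisation $\brho \in (0, 1/2)^d$ is used only for the cosmetic identity $H(\brho^{-1}) = \prod \rho_i^{-1}$, and the required estimate still goes through because $H(\brho^{-1}) \le \prod \rho_i^{-1}$ always. A similar check is needed when verifying the inhomogeneous extension flagged in the remark, but this is standard: one simply subtracts a base point from $B$ to reduce to a homogeneous Bohr set with doubled parameters.
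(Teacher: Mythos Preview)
Your overall strategy---bound the smooth sum by an inhomogeneous Bohr-set count, inflate the box until its volume reaches $\phi(L(4N))$, and then invoke Lemma \ref{outer} together with the base-point trick---is exactly the paper's approach. The gap is in how you inflate. You scale all radii by the same scalar $K$, so the new radii $2K\rho_i$ can exceed $1$; you then claim that Lemma \ref{outer} survives because ``$H(\brho^{-1}) \le \prod \rho_i^{-1}$ always''. This inequality is backwards: $H(\brho^{-1}) = \prod_i \max(1,\rho_i^{-1}) \ge \prod_i \rho_i^{-1}$, with strict inequality precisely when some $\rho_i > 1$. In the proof of Lemma \ref{outer} the identity $H(\brho^{-1}) = \prod_i \rho_i^{-1}$ is used in the direction that breaks when $H(\brho^{-1})$ is larger: one needs $H(\brho^{-1}) \le 2^k L(N)$ to bound $\phi(2^{-k}H(\brho^{-1}))$ by $\phi(L(N))$, and one needs $2^k/H(\brho^{-1}) = \vol(B)/N$ to convert back. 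Concretely, if $d=2$, $\rho_1$ is close to $1/2$, and $\rho_2$ is extremely small, your $K$ can be large and $2K\rho_1$ can be of order $K$; the bound on $H(\blam)$ that one extracts is then of order $K \cdot L(4N)$ rather than $L(4N)$, and the argument no longer closes.

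The paper avoids this by inflating \emph{anisotropically}: it chooses $\bdel$ with $\rho_i \le \del_i < 1/2$ componentwise and $\del_1 \cdots \del_d N = \phi(L(4N))$ (possible once $N$ is large enough that $\phi(L(4N)) < 2^{-d}N$). After passing to the support $[-2,2]$ the radii become $2\del_i \in (0,1)$, and a further covering by at most $2^k$ translates with radii $\eta_i \in [\del_i/2, 1/4)$ brings everything back into the range where Lemma \ref{outer} applies verbatim. Your proof is repaired by replacing the isotropic scaling with this capped, coordinate-wise inflation.
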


\begin{proof} Recall $\fC(\bome)$ from \eqref{def expect weight}. 
Since $\omega_i\geq 0$ for each $i$,
$$
D_{\bomega}^{\Lambda}(\fB; N) \geq - \fC(\bome)
\vol(\fB; N) \ge - \fC(\bome) \phi(L(N)).
$$

To bound $D_{\bomega}^{\Lambda}(\fB; N)$ 
from above, we write $\fB = (\bgam; \brho)$. We may assume that 
$L(4N) > 2^{k+1}$, so that $\phi(L(4N)) = 4N/L(4N) < 2^{-d} N$. 
We inflate $\fB = (\bgam; \brho)$ to $\fB'=(\bgam; \bdel)$,
where $\bdel=(\del_1,\ldots,\del_{k-1})$
is chosen so that 
\[
\rho_i \leq \del_i < 1/2 \quad (1 \le i \leq k-1),
\qquad
\vol(\fB'; N) = \phi(L(4N)).
\]
We know from \eqref{eq omega supported dyadically} that $\ome_i(x) \ll \mathds{1}_{[-2,2]}(x)$ for each $i$, whence
\begin{align*} 
D_{\bomega}^\Lam(\fB; N) &\ll \phi(L(N))
+ \# B,
\end{align*}
where $B = (\Lambda - (\bgam,0)) \cap \cP_{2(\bdel,N)}$.

We now cover $B$ by at most $2^k$ many Bohr sets
$B' = B_\Lam^{\btau}(2N; \boldsymbol \eta)$, where 
\[
\frac{\del_i}2 \le \eta_i < \frac14
\qquad (1 \le i \le d).
\]
If $B' \neq \emptyset$, then 
we fix $\bn_0 \in B'$ and write the elements $\bn \in B'$
as $\bn = \bm + \bn_0$. By the triangle inequality, we have $\bm \in B_\balp^\bzero(4N;2 \boldsymbol \eta)$.
Therefore
$\# B' \le 
\# B_\Lam^\bzero(4N;2 \boldsymbol \eta)$.
Lemma \ref{outer} yields
$
\# B_\balp^\bzero(4N;2 \boldsymbol \eta) \ll \phi(L(4N)),
$
so $D_{\bomega}(\fB; N) \ll \phi(L(4N))$. Finally, we note that $\phi \circ L$ is doubling:
\[
\phi(L(4N)) = \frac{4N}{L(4N)} \le \frac{4N}{L(N)} = 4\phi(L(N)).
\qedhere
\]
\end{proof}

\subsection{Large scales:
Poisson summation and a gap principle}
\label{LargeScales}

Throughout this subsection, we work with the remaining case 
of large test boxes $\fB$, that is,
\begin{equation}
\label{eq large boxes}
\vol(\fB; N)=\frac{N}{H(\brho^{-1})}  > \phi(L(N)).
\end{equation}
Our goal is to establish the following bound.

\begin{prop}
\label{prop discr large box}
If $\vol(\fB; N) > \phi(L(N))$, then $D_{\bomega}^\Lambda(\fB;N) \ll \phi(L(N))$.
\end{prop}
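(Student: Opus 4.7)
My plan is to apply Poisson summation to convert $D_\bome^\Lambda(\fB;N)$ into a sum over the dual lattice $\Lam^*$, and then bound this dual sum using the rapid decay of $\hat\omega_i$ together with the diophantine structure of $\Lam^*$. Since $\Lam$ is unimodular, Poisson summation yields
\[
D_\bome^\Lambda(\fB;N) = \vol(\fB;N) \sum_{\bzero \ne \bn \in \Lam^*} \hat\omega_k(N n_k) \prod_{i \le d} \hat\omega_i(\rho_i n_i) e(-\gam_i n_i),
\]
since the $\bn=\bzero$ term contributes precisely $\fC(\bome)\vol(\fB;N)$ and cancels the main term. The smoothness hypothesis $\omega_i \in C^\ell$ with compact support yields the quantitative Fourier decay $|\hat\omega_i(\xi)| \ll (1+|\xi|)^{-\ell}$, so by the triangle inequality it suffices to bound the non-negative dual sum
\[
T := \sum_{\bzero \ne \bn \in \Lam^*} (1 + N|n_k|)^{-\ell} \prod_{i \le d} (1+\rho_i|n_i|)^{-\ell}
\]
by an amount of order $\phi(L(N))/\vol(\fB;N)$.

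To bound $T$, I would decompose $\Lam^* \setminus \{\bzero\}$ dyadically according to the multiplicative height $H(\bn')$ of the first $d$ coordinates. For a point with $H(\bn') \in [2^K, 2^{K+1})$, the diophantine condition \eqref{eq: lattice dio lower bound} gives $|n_k| > 1/(H(\bn')\phi(H(\bn')))$; combined with monotonicity of $\phi$ and the defining identity $L(N)\phi(L(N)) = N$, this translates, for $K \le \log_2 L(N)$, into a quantitative decay bound on $|\hat\omega_k(Nn_k)|$, while for larger $K$ the decay in the $\bn'$-directions from $\hat\omega_i$ takes over. To count the lattice points in each dyadic shell, I would invoke the key geometric input already established in the proof of Lemma~\ref{outer}: in the large-box regime $\vol(\fB;N) > \phi(L(N))$ the first successive minimum satisfies $\lambda_1(\Lam^*, \cP_{\bs^{-1}}) > 1/4$, where $\bs = (\brho, N)$, and combined with the Minkowski product identity $\prod_i \lambda_i(\Lam^*,\cP_{\bs^{-1}}) \asymp \vol(\fB;N)$, this controls the number of points in each dilated box via the standard counting bound $\ll \prod_i \max(1,r/\lambda_i)$.

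The final step sums the contributions across all dyadic levels. The doubling assumption \eqref{eq: phi doubling} ensures that $\phi(2^K)$ grows at most like $\xi^K$, so the tails become geometric. I expect the main obstacle to lie precisely in the careful bookkeeping needed to balance three competing effects across dyadic scales: the $\ell$-fold decay of $\hat\omega_k$ in the $n_k$-direction driven by the diophantine lower bound, the analogous decay of $\hat\omega_i$ in each $\bn'$-direction, and the $\xi^K$-growth of $\phi$ which partially offsets the first decay. The smoothness assumption \eqref{eq smoothness}, $\ell > 2 + 2\log\xi/\log 2$, is precisely the threshold at which these three contributions combine into a convergent geometric series; any weaker smoothness would leave a divergent high-frequency tail. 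Summing yields $T \ll \phi(L(N))/\vol(\fB;N)$, and multiplying back by $\vol(\fB;N)$ gives the claim.
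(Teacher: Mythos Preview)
Your opening is fine and matches the paper: Poisson summation gives
\[
|D_\bome^\Lambda(\fB;N)| \le D_\bome^+(\fB;N) := \fC(\bome)\vol(\fB;N)\sum_{\blam\in\cL}\widehat\omega_k(N\lam_k)\prod_{i\le d}\widehat\omega_i(\rho_i\lam_i),
\]
and $\widehat\omega_i(\xi)\ll(1+|\xi|)^{-\ell}$. The divergence from the paper lies in how you propose to estimate this dual sum, and there is a genuine gap.

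The paper does \emph{not} count lattice points via successive minima. Instead it introduces a full $k$-dimensional dyadic decomposition $\cU(\btau)$, $\btau\in\bZ_{\ge 0}^k$, and proves a \emph{gap principle}: within each $\cU(\btau)$ the projection $\pi_k:(\blam,\lam_k)\mapsto\lam_k$ is injective, and any two images are separated by at least $u(\btau)\asymp 1/(H(\bfN_\bt)\phi(2^d H(\bfN_\bt)))$. The point is that this spacing comes from applying the diophantine hypothesis \eqref{eq: lattice dio lower bound} to the \emph{difference} $\boldsymbol\ell-\boldsymbol\ell'\in\Lam^*\setminus\{\bzero\}$, not to individual points. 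Once the gap is in hand, the sum of $\widehat\omega_k(N\lam_k)$ over $\cU(\btau)$ becomes a one-dimensional sum $\sum_{i\ge 1}(1+iNu(\btau))^{-\ell}$, which is what produces the factor $\phi(2^d H(\bfN_\bt))^2/\phi(L(N))$ and ultimately the bound $\phi(L(N))$.

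Your sketch applies the diophantine condition only to single points, obtaining $|n_k|>1/(H(\bn')\phi(H(\bn')))$; this bounds the \emph{smallest} $|n_k|$ in a fibre but says nothing about how the remaining $n_k$-values are distributed, so the sum over $n_k$ is not controlled. You then appeal to $\lambda_1(\Lam^*,\cP_{\bs^{-1}})>1/4$ together with Minkowski's second theorem for counting. But these two facts alone cannot recover the target bound: knowing $\lambda_1\gg 1$ and $\prod_i\lambda_i\asymp\vol(\fB;N)$ is consistent with $\lambda_1\asymp 1$, in which case a single short vector $\bv$ already forces $T\gg 1$ and hence $\vol(\fB;N)\cdot T\gg\vol(\fB;N)>\phi(L(N))$. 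The successive-minima data simply does not encode the multiplicative structure of \eqref{eq: lattice dio lower bound}, which is what makes $\phi(L(N))$ appear.

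A secondary issue is that your one-parameter decomposition by $H(\bn')$ is too coarse: the weight $\prod_{i\le d}(1+\rho_i|n_i|)^{-\ell}$ depends on the individual $\rho_i|n_i|$, not on $H(\bn')$, so a shell $H(\bn')\in[2^K,2^{K+1})$ contains points with wildly different weights; the paper's $k$-dimensional dyadic decomposition is what makes the weight essentially constant on each piece. In short, the missing ingredient is the gap principle for $\pi_k$ obtained from differencing, together with the finer $k$-dimensional dyadic partition on which it is deployed.
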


We begin by passing to Fourier space.

\begin{lem}
\label{le discr small box}
Define 
$$
\cL = \Lambda^*\setminus\{\bzero\}
$$
and
\begin{align}
D_{\bomega}^+(\fB;N)
\label{def trunc contr}
= \fC(\bomega) \vol(\fB; N) 
\sum_{\blam \in \cL} 
\widehat \ome_k
(N \lambda_k)
\prod_{i \le d} 
\widehat{\ome}_i(\rho_i \lambda_i).
\end{align}
Then
\begin{equation}
\label{UpperTruncate}
|D_{\bomega}^\Lam(\fB;N)| \le D_{\bomega}^+(\fB;N).
\end{equation}
Moreover, if $\bgam = \bzero$ then
\begin{equation}
\label{LowerTruncate}
D_{\bomega}^\Lam(\fB;N)= D_{\bomega}^+(\fB;N).
\end{equation}
\end{lem}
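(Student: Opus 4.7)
The plan is to apply Poisson summation on the unimodular lattice $\Lambda$. Introduce the test function
$$
f(\bx) := \omega_k(x_k/N) \prod_{i \le d} \omega_i\!\left(\frac{x_i - \gam_i}{\rho_i}\right),
$$
so that the smooth point count in the definition of $D_{\bomega}^{\Lambda}(\fB;N)$ is exactly $\sum_{\bx \in \Lambda} f(\bx)$. A direct rescaling-and-translation computation gives
$$
\widehat{f}(\bm) = \vol(\fB;N)\, e(-\bgam \cdot \bm')\, \widehat{\ome}_k(N m_k) \prod_{i \le d} \widehat{\ome}_i(\rho_i m_i),
$$
where $\bm' = (m_1,\ldots,m_d)$. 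In particular $\widehat{f}(\bzero) = \fC(\bomega)\vol(\fB;N)$, which matches the expectation term subtracted in the definition of the discrepancy.

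Poisson summation on $\Lambda$ then reads $\sum_{\bx \in \Lambda} f(\bx) = \sum_{\bm \in \Lambda^*} \widehat{f}(\bm)$, the prefactor being $1$ by unimodularity. Isolating the $\bm = \bzero$ contribution cancels $\fC(\bomega)\vol(\fB;N)$ exactly, leaving
$$
D_{\bomega}^{\Lambda}(\fB;N) = \vol(\fB;N) \sum_{\blam \in \cL} e(-\bgam \cdot \blam')\, \widehat{\ome}_k(N\lambda_k) \prod_{i \le d} \widehat{\ome}_i(\rho_i \lambda_i).
$$
The crucial input from \eqref{positive} is that each $\widehat{\ome}_i$ is pointwise non-negative. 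Applying the triangle inequality to absorb the unimodular phases $e(-\bgam \cdot \blam')$ then yields \eqref{UpperTruncate}. When $\bgam = \bzero$, every phase equals $1$ and every summand is non-negative, so the triangle inequality becomes an equality, proving \eqref{LowerTruncate}.

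The only technical point is justifying absolute convergence of the Fourier side, so that Poisson summation is applicable. Because each $\ome_i$ is compactly supported and $C^\ell$, integration by parts yields $\widehat{\ome}_i(\xi) \ll (1+|\xi|)^{-\ell}$; combined with the geometry-of-numbers fact that $\Lambda^*$ has only polynomially many points in any dilate of an axis-parallel box, the dual sum converges once $\ell$ is taken large enough, which is guaranteed by \eqref{eq smoothness}. I expect no genuine obstacle here: the lemma is essentially Poisson summation dressed with the positivity assumption on the Fourier transforms of the weights.
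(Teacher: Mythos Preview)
Your argument is correct and mirrors the paper's proof essentially line for line: both apply Poisson summation over the unimodular lattice $\Lambda$, compute the Fourier transform of the scaled and shifted weight, isolate the $\bzero$ term to cancel the expected count, and then invoke the non-negativity assumption \eqref{positive} together with the triangle inequality to obtain \eqref{UpperTruncate}, with equality when $\bgam = \bzero$. Your added remark on absolute convergence via the decay bound $\widehat{\ome}_i(\xi) \ll (1+|\xi|)^{-\ell}$ is a sensible justification that the paper leaves implicit.
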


\begin{proof}
Let $\rho_k = N$ and 
$\gam_k =0$.
Then
\[
D_{\bomega}^\Lam (\fB;N) + \fC(\bomega) \vol(\fB; N) = 
\sum_{\boldsymbol \ell \in \Lam} 
g(\boldsymbol \ell), \quad 
\text{where}
\quad
g(\boldsymbol \ell) =
\prod_{i \le k} 
\ome_i\left(\frac{\ell_i - \gam_i}
{\rho_i} \right).
\]
Note that 
$$
\widehat{g}(\bx)=
e(\bx \cdot (\bgam,0))
\prod_{i \le k} 
(\rho_i \widehat{\ome}_i(\rho_i x_i))
=
\vol(\fB; N)
e(\bx\cdot (\bgam,0))
\prod_{i \le k} 
\widehat{\ome}_i(\rho_i x_i).
$$
Poisson summation yields
$$
\sum_{\boldsymbol \ell \in \Lam} 
g(\boldsymbol \ell)
=
\sum_{(\blam,\lambda_k) \in \Lam^{*}} 
\widehat{g}((\blam,\lambda_k) )
= 
\vol(\fB; N)
\sum_{(\blam,\lambda_k) \in \Lam^{*}} 
e(\blam \cdot \bgam)
\prod_{i \le k} 
\widehat{\ome}_i (\rho_i \lambda_i).
$$
Upon accounting for the contribution of $(\blam, \lam_k) = \bzero$, and using the triangle inequality, 
we obtain \eqref{UpperTruncate}. If $\bgam = \bzero$, then the summands are non-negative, so we have equality in that final step.
\end{proof}

In the analysis of $D_{\bomega}^+(\fB;N)$, a special role is played by the regime
$$
\cU := \{ (\blam,\lam_k)\in \Lam^*: 
\bzero \neq \blam \preccurlyeq \brho^{-1} \text{ and }
\vert \lambda_k\vert \leq N^{-1}
\} \subseteq \cL
$$
coming from the uncertainty principle, since here none of the 
$\widehat{\ome}_i(\rho_i \lambda_i)$ decay.
One can morally replace each $ \widehat{\ome}_i(\rho_i \lambda_i)$ 
by $1$ in this range. Bearing the factor of 
$\vol(\fB;N)$ in mind, this could conceivably 
lead to an unacceptably large contribution from just one term.
To rule this out, we show that $\cU = \emptyset$.
To see this, observe that if $(\blam,\lambda_k) \in \cU$ then
$$
\vert \lambda_k \vert >
\frac{1}{H(\blam) \phi(H(\blam))}
\ge \frac{1}{H(\brho^{-1}) \phi(H(\brho^{-1}))}
= \frac{\vol(\fB;N)}
{N\phi(H(\brho^{-1}))}.
$$
By \eqref{eq large boxes}, 
we have $H(\brho^{-1}) \le N/\phi(L(N)) = L(N)$.
Consequently
$$
\vert \lambda_k \vert  >
\frac{\phi(L(N))} {N\phi(L(N))}
= \frac{1}{N},
$$
whence $\cU=\emptyset$.

Beyond $\cU$, we will make use of the decay of the 
$\widehat{\omega}_i$.
Indeed, partial integration implies that
\begin{equation}
\label{eq rapid decay}
\widehat{\omega}_i(\xi)
\ll_\ell (1+\vert \xi \vert)^{-\ell}
\qquad (\xi\in \bR, \quad 1 \le i \le k).
\end{equation}
To use the decay efficiently, 
we employ the scale vectors 
\begin{equation*}
\label{def pseudo dyadic}
\bfN((\bd,t_k)):=
(\rho_1^{-1} \breve{t}_1 ,
\ldots,\rho_{k-1}^{-1} \breve{t}_{k-1}, N^{-1} \breve{t}_k),
\quad \mathrm{where}\quad \breve{t} = 2^{t}-1.
\end{equation*}
Furthermore, put 
$\mathbf{1} = (1,\ldots,1) \in \bR^{k}$ and
\begin{equation*}
\label{def enlarged uncert}
\cU(\btau) = 
\{ \boldsymbol \ell \in \cL: 
\bfN(\btau) \preccurlyeq
\boldsymbol \ell \preccurlyeq \bfN(\btau + \bone)
\}.
\end{equation*}
Note that $\cU(\bzero)=\cU$ and
\begin{equation}
\label{eq covering by pseudo dyadics}
\cL\setminus \cU = \bigcup_{\btau \in \fD} \cU(\btau),
\quad \mathrm{where} \quad 
\fD=\bZ^{k}_{\geq 0}\setminus\{\bzero\}.
\end{equation}

We exploit the decay of the $\widehat{\omega}_i$,
which dampens the contribution of lattice points
outside of the uncertainty range $\cU$. 
To track the extent to which $\cU((\bd,t_k))$ exceeds $\cU$, the quantity \begin{equation*} \label{def quotient of dyadics}
Q(\bt) := 
2^{t_1+\cdots+t_d}
\end{equation*}  
is helpful. 
By \eqref{eq rapid decay}, if
$(\blam, \lam_k) \in \cU((\bd,t_k))$ then
\begin{align}
\label{eq decay in dyadic range}
\prod_{i \le d} \widehat \ome_i (\rho_i \lam_i) &\ll
H((\rho_1 \lam_1, \ldots, \rho_{d} \lam_{d}
))^{-\ell} \ll Q(\bt)^{-\ell}.
\end{align}
Combining \eqref{def trunc contr}, 
\eqref{eq covering by pseudo dyadics} 
and \eqref{eq decay in dyadic range}
delivers
\begin{equation}
\label{eq estimate via dyadic cover}
D_{\bomega}^+(\fB;N)
\ll 
\sum_{\btau \in \fD}
X(\btau),
\end{equation}
where
\begin{equation*}
X((\bt,t_k)) = \frac{\vol(\fB;N)}{Q(\bt)^{\ell}} 
\sum_{(\blam,\lam_k) \in \cU((\bt,t_k))}  \widehat \ome_k (N \lam_k).
\end{equation*}

To proceed, we bound each $X((\bt, t_k))$. 
For ease of notation, we write
$$
\bfN((\bd,t_k) + \bone) = (\mathbf N_\bt, N_k) = (\mathbf N, N_k), \qquad
Q = Q(\bt).
$$
Note that
\begin{equation}
\label{Qsize}
Q \asymp \frac{H(\bfN)}{H(\brho^{-1})}.
\end{equation}

\begin{lem}
\label{le app regime outside uncert}
If $\btau =(\bt,t_k) \in \fT$, then
\[
X(\btau) \ll 
(2^{t_k}Q)^{2-\ell} \frac{\phi(2^d H(\bfN))^2}{\phi(L(N))},
\]
where the implied constant does not depend on $\btau$.
\end{lem}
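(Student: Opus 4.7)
My plan is to bound $X(\btau)$ by combining (a) the rapid decay of $\widehat \ome_k$ at scale $2^{t_k}$ with (b) a cardinality bound on $\cU((\bt,t_k))$ obtained from a Diophantine repulsion in $\Lambda^*$. Step (a) is immediate: any $(\blam, \lam_k) \in \cU((\bt, t_k))$ satisfies $|N \lam_k| \ge 2^{t_k} - 1$, so \eqref{eq rapid decay}, together with the trivial bound $\|\widehat \ome_k\|_\infty \ll 1$ to handle $t_k = 0$, yields $|\widehat \ome_k(N \lam_k)| \ll 2^{-\ell t_k}$ uniformly on $\cU((\bt, t_k))$.

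The heart of the argument is (b): I claim that whenever $\cU((\bt,t_k)) \neq \varnothing$, one has
\[
|\cU((\bt, t_k))| \ll N_k H(\bfN) \phi(2^d H(\bfN)).
\]
The key observation is that two distinct elements of $\cU((\bt,t_k))$ cannot share a $\lam_k$-coordinate: if $(\blam, \lam_k), (\blam', \lam_k) \in \Lambda^*$ with $\blam \neq \blam'$, the difference $(\blam - \blam', 0) \in \Lambda^* \setminus \{\bzero\}$ would violate \eqref{eq: lattice dio lower bound}. Moreover, applying \eqref{eq: lattice dio lower bound} to the non-zero difference $(\blam - \blam', \lam_k - \lam_k') \in \Lambda^*$, together with the crude bound $H(\blam - \blam') \le 2^d H(\bfN)$ and the monotonicity of $\phi$, yields the separation
\[
|\lam_k - \lam_k'| > \bigl(2^d H(\bfN) \phi(2^d H(\bfN))\bigr)^{-1}.
\]
Since the $\lam_k$-values in $\cU((\bt,t_k))$ lie in a set of one-dimensional measure $O(N_k)$, a counting step gives $|\cU((\bt, t_k))| \ll 1 + N_k H(\bfN) \phi(2^d H(\bfN))$, and the non-emptiness of $\cU((\bt,t_k))$ combined with \eqref{eq: lattice dio lower bound} applied to one of its elements forces $N_k H(\bfN) \phi(H(\bfN)) > 1$, absorbing the additive $1$.

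Substituting (a) and (b) into $X(\btau)$ and using the identity $\vol(\fB; N) \cdot N_k H(\bfN) \asymp 2^{t_k} Q$ (which follows from $\vol(\fB; N) = N/H(\brho^{-1})$, $N_k \asymp 2^{t_k}/N$, and \eqref{Qsize}) give the intermediate bound $X(\btau) \ll (2^{t_k} Q)^{1-\ell} \phi(2^d H(\bfN))$. The target is this intermediate bound multiplied by $(2^{t_k} Q) \phi(2^d H(\bfN))/\phi(L(N))$, so it suffices to verify the inequality $\phi(L(N)) \le 2^{t_k} Q \phi(2^d H(\bfN))$. Combining the consequence $N_k H(\bfN) \phi(H(\bfN)) > 1$ (valid when $\cU((\bt,t_k)) \neq \varnothing$) with the large-scales assumption $\vol(\fB;N) > \phi(L(N))$, which via $N = L(N)\phi(L(N))$ forces $H(\brho^{-1}) < L(N)$, produces this inequality after a short rearrangement. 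The principal obstacle is the counting step (b): converting the multiplicative Diophantine condition on $\Lambda^*$ into a metric repulsion on the $\lam_k$-coordinates of $\cU((\bt,t_k))$. Once that is secured, the remainder is algebraic bookkeeping that exploits the large-scales regime.
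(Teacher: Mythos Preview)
Your argument is correct and follows a somewhat different, arguably cleaner, route than the paper's proof. Both approaches rest on the same gap principle: the projection $\pi_k$ onto the last coordinate is injective on $\cU(\btau)$, with image separated by $\gg (H(\bfN)\phi(2^d H(\bfN)))^{-1}$. The paper then enumerates $\cW^\pm = \{w_1^\pm < w_2^\pm < \cdots\}$ and exploits \emph{three} separate lower bounds for $1+w_i^\pm$ (namely $2^{t_k}$, $iNu(\btau)$, and $i\phi(L(N))/(Q\phi(2^d H(\bfN)))$) to control $(1+w_i^\pm)^\ell$ and sum $\sum i^{-2}$. You instead take the blunt uniform estimate $\widehat\ome_k\ll 2^{-\ell t_k}$ on $\cU(\btau)$ and multiply by the cardinality bound $|\cU(\btau)|\ll N_k H(\bfN)\phi(2^d H(\bfN))$. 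After the identity $\vol(\fB;N)\,N_k H(\bfN)\asymp 2^{t_k}Q$, this yields the \emph{sharper} intermediate bound
\[
X(\btau)\ll (2^{t_k}Q)^{1-\ell}\,\phi(2^d H(\bfN)),
\]
which you then relax to the stated bound via the inequality $\phi(L(N))\ll 2^{t_k}Q\,\phi(2^d H(\bfN))$, proved by combining $N_k H(\bfN)\phi(H(\bfN))>1$ (from non-emptiness of $\cU(\btau)$ and \eqref{eq: lattice dio lower bound}) with the large-scales hypothesis \eqref{eq large boxes}. The paper instead folds the large-scales hypothesis directly into the pointwise bound on $w_i^\pm$. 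Your separation of concerns is tidy and, incidentally, would allow the smoothness requirement \eqref{eq smoothness} to be weakened by one in the downstream summation over $\btau\in\fD$; the paper's splitting of $(1+w_i)^\ell$ into $\ell-2$ copies of $2^{t_k}$ and two copies carrying factors of $i$ loses this extra power.
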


\begin{proof}
Consider the function
$\pi_k: \cU(\btau) \rightarrow \bR$
defined by $(\blam,\lambda_k) \mapsto \lambda_k$.
We claim that $\pi_k$
is injective and, moreover, 
any two distinct elements of $\pi_k(\cU(\btau))$
are spaced apart by at least a constant times
\[
u(\btau) := \frac1{H(\bfN) \phi(2^d H(\bfN))}.
\]
Let $\boldsymbol \ell = (\blam, \lam_k)$ and 
$\boldsymbol \ell' = (\blam', \lam_k')$ 
be distinct elements of $\cU(\btau)$.
By \eqref{eq: lattice dio lower bound}
and $H(\blam - \blam')
\le 2^d H(\bfN)$,
we have
$$
\vert \pi_k(\boldsymbol \ell) - \pi_k(\boldsymbol \ell') \vert
\geq \frac{1}
{2^d H(\bfN)
\phi(2^d H(\bfN))} \gg u(\btau),
$$
confirming the claim. 

Next, we enumerate the elements $w_i^+$ 
of 
$$
\cW^+ := \{N \pi_k(\boldsymbol \ell) : 
\boldsymbol \ell \in \cU(\btau), \: \pi_k(\boldsymbol \ell) > 0 \}
$$ 
so that 
$w_i^+ < w_j^+$ if $i<j$. 
Observe that
$w_i^+\gg N u(\btau)$ for any $w_i^+\in \cW^+ $.
Thus, by \eqref{eq large boxes} and \eqref{Qsize},
\[
w_i^+ \gg i N u(\btau) 
\geq i \phi(L(N)) H(\brho^{-1}) u(\btau)
\gg i \frac{\phi(L(N))}
{Q\phi(2^d H(\bfN))}.
\]
If $t_k\geq 1$, then each $w_i^+ \gg 2^{t_k}$ for all $i$. Therefore
\[
(1 + w_i^+)^{\ell} \gg 2^{(\ell - 2) t_k} iNu(\btau) \cdot i\frac{\phi(L(N))}{Q\phi(2^d H(\bfN))}
\quad (1\leq i \leq \#\cW^{+}),
\]
uniformly over $t_k\geq 0$. Similarly, we enumerate
\[
\cW^- := \{N |\pi_k(\boldsymbol \ell)| :
\boldsymbol \ell \in \cU(\btau), \: 
\pi_k(\boldsymbol \ell) < 0 \} 
= \{ w_1^- < w_2^- < \cdots < w_{\# \cW^-}^- \},
\]
and see that
\[
(1 + w_i^-)^{\ell} \gg 2^{(\ell - 2)t_k} iNu(\btau) 
\cdot i\frac{\phi(L(N))}{Q\phi(2^d H(\bfN))}
\quad (1\leq i \leq \#\cW^{-}).
\]
Since $\widehat{\ome}_k(x) \ll (1+\vert x \vert^\ell)^{-1}$,
we conclude that
\begin{align*}   
X(\btau) = \frac{NQ^{-\ell}}{H(\brho^{-1})} 
\sum_{w\in \cW^{+}\cup \cW^{-} } 
\widehat{\ome}_k(w)
& \ll 
\frac{NQ^{-\ell}}{H(\brho^{-1})} 
\sum_{i\geq 1} 
\frac{Q\phi(2^d H(\bfN))}{2^{(\ell- 2) t_k}i^2 N u(\btau) \phi(L(N))} \\
&\ll 
\frac{Q^{1-\ell}}{H(\brho^{-1})} 
\frac{2^{(2-\ell)t_k}\phi(2^d H(\bfN))}{u(\btau) \phi(L(N))}.
\end{align*}
Inserting the definition of $u(\btau)$, and recalling \eqref{Qsize}, yields
\[
X(\btau) \ll 
\frac{Q^{1-\ell}}{H(\brho^{-1})} 
\frac{2^{(2-\ell)t_k}H(\mathbf N)\phi(2^d H(\mathbf N))^2}
{\phi(L(N))} =
(2^{t_k}Q)^{2-\ell} \frac{\phi(2^d H(\bfN))^2}{\phi(L(N))}.
\]
\end{proof}

\begin{proof}
[Proof of Proposition \ref{prop discr large box}]
By Lemma \ref{le discr small box}, 
it suffices to prove that 
\[
D_{\bomega}^+(\fB;N) \ll \phi(L(N)),
\]
and we may assume that $N$ is large. Using Lemma \ref{le app regime outside uncert}
in the estimate \eqref{eq estimate via dyadic cover} 
gives
$$
D_{\bomega}^+(\fB;N)
\ll 
\sum_{(\bt,t_k) \in \fD}
(2^{t_k} Q(\bd))^{2-\ell} 
\frac{\phi(2^d H(\bfN_\bt))^2}{\phi(L(N))}.
$$
Put $\Vert \bd \Vert_1 = t_1 + \cdots + t_{d}$.
By \eqref{eq large boxes},
$$
2^d H(\bfN_\bt) \le 2^{2d+\Vert \bd \Vert_1} H(\brho^{-1}) 
\leq 2^{2d+\Vert \bd \Vert_1} L(N).
$$
Therefore $\phi(2^d H(\bfN_\bt))\leq 
\phi(2^{2d+\Vert \bd \Vert_1} L(N))$.
Let $\xi$ be as in \eqref{eq: phi doubling}.
As $N$ is large, we now have
\[
\phi(2^d H(\bfN_\bt)) 
\leq (\xi+\varepsilon)^{2d+\Vert \bd \Vert_1} 
\phi(L(N)),
\]
where $\varepsilon>0$ is a small constant.
Taking $Q(\bt) = 2^{\Vert \bd \Vert_1}$ into account produces
\begin{align*}
D_{\bomega}^+(\fB;N)
&\ll 
\sum_{(\bd,t_k) \in \fD}
(2^{t_k + \| \bt \|_1})^{(2-\ell)} 
\frac{(\xi+\varepsilon)^{2\Vert \bd \Vert_1} 
\phi(L(N))^2}{\phi(L(N))} \\
& \ll 
\phi(L(N))
\sum_{\bd \in \bZ_{\geq 0}^d}
\bigg(
\frac{(\xi+\varepsilon)^{2}}{2^{\ell - 2}}
\bigg)^{\Vert \bd \Vert_1}.
\end{align*}
By \eqref{eq smoothness}, we have
$2^{\ell - 2} > (\xi+\varepsilon)^2$, completing the proof.
\end{proof}

In light of Lemma \ref{lem discr small box},
this completes the proof of Theorem \ref{thm: main general lattice}.

\section{Proof of Theorem \ref{thm: lower general lattice}}
\label{LowerProof}

We adopt the infrastructure of \S \ref{LargeScales}. The first idea is to centre the box $\fB$ at $\bgam = \bzero$ so that, after Poisson summation, the summands are all non-negative. The second idea is to use a good approximation to find a single large summand, which informs the choice of length vector $\brho$ for the box $\fB$.

By \eqref{positive}, there exists a constant 
$c \in (0, 1/2)$ such that $\widehat \ome_i(x) \ge c$
for $|x| \le c$  and $1 \le i \le k$.
Let $(\blam,\lam_k) \in \Lam^*$ satisfy \eqref{DualGood lattice}, with $H(\blam)$ large. 
Consider $\brho = \brho(\blam)\in (0,1/2)^d$ 
with components $\rho_i = c / (1 + \vert \lam_i \vert)$, for $i\leq d$. 
We take
$$
N = \lfloor c \phi(H(\blam)) H(\blam) \rfloor,
$$
where $\lfloor \cdot \rfloor$
denotes the floor function, and note that $H(\blam) \ge L(N)$. 
As $H(\blam)$ is large, we have $N \in \bN$ with 
$N \asymp \phi(H(\blam)) H(\blam)$.
By \eqref{LowerTruncate},
$$
D_{\bomega}(\Lambda;N) \ge D_{\bomega}^\Lambda(\fB; N) =
D_{\bomega}^+(\fB; N).
$$
As $\blam \in \cL$ contributes at least $c^k$ to the sum in the definition \eqref{def trunc contr} of $D_\bome^+(\fB; N)$, and all of the summands are non-negative, we have
\begin{equation*}
\label{eq: lower bound term}
D_{\bomega}(\Lam; N) \gg
\vol(\fB;N) \gg \frac{\phi(H(\blam)) H(\blam)}{H(\blam)} 
= \phi(H(\blam)) \ge \phi(L(N)).
\end{equation*}
This completes the proof of Theorem \ref{thm: lower general lattice}.

\section{Concluding Remarks}
\label{ConcludingRemarks}

\subsection{The scales where the smooth discrepancy is large}

After applying Poisson summation, we saw that $D_\bome(\fB; N)$ is maximised when $\fB$ is centred at the origin, i.e. $\bgam = \bzero$. For such boxes $\fB$, this realises $D_\bome(\fB; N)$ as a sum of non-negative terms. 
We saw in \S \ref{LowerProof} that a single large summand arises if the scale $N$ is related to a good approximation. A closer look at the proof of Lemma \ref{le app regime outside uncert} reveals that this is, in fact, the only way in which $D_\bome(\fB; N)$ can be large.

\subsection{Dual Kronecker sequences}

We expect there to be a dual analogue of our local-to-global principle. This should involve the smooth discrepancy of the dual Kronecker sequence $(\bn \cdot \balp \mmod 1)_{\bn \in \bZ^d}$. However, we have not explored this and are not in a position to make precise predictions about it.

\subsection{Smooth lattice point counting in general}

Theorem \ref{thm: main general lattice} is a smooth count of lattice points in boxes that are short in all but one direction. The techniques developed here can be used, with additional ideas, to count smoothly-weighted lattice points in general boxes. This is an interesting question in its own right and has various applications. For instance, one could use such a result to smoothly
count the number of algebraic
integers up to a given height
in a 
number field.
Whilst counting results are available without smoothing~\cite{Wid2012},
the smoothed counting results would have much superior error terms and are just as useful for most purposes.

\subsection{A path towards Littlewood's conjecture.}

We have shown that Littlewood's conjecture holds if and only if the smooth discrepancy of any two-dimensional Kronecker sequence is unbounded. It could be that the smooth discrepancy of \textbf{any} sequence in two or more dimensions is unbounded.

\begin{qn} \label{MainOpenQuestion}
Let $\ell \ge 3$. In $d \ge 2$ dimensions, does there exist a sequence with bounded $C^\ell$-smooth discrepancy?
\end{qn}

\noindent
If the answer is negative, like in the classical setting, 
then Littlewood's conjecture would be solved! 

\subsection*{Funding}
NT was supported by the Austrian Science Fund: 
projects \mbox{J $4464$-N} and I $4406$-N,
and was 
funded by the Deutsche Forschungsgemeinschaft 
(DFG, German Research Foundation) -
Project-ID 491392403 - TRR 358.
NT carried out part of this work 
during the 2024 program on analytic number theory
at Institut Mittag-Leffler in Djursholm, Sweden,
which was supported by grant no. 2021-06594 
of the Swedish Research Council.

\subsection*{Acknowledgements}
 
We thank Andy Pollington for helpful discussions.

\subsection*{Rights}

For the purpose of open access, 
SC has applied a Creative Commons Attribution 
(CC-BY) licence to any 
Author Accepted Manuscript version arising 
from this submission.


\end{document}